\newcommand{\C}{{\mathbb C}}
\newcommand{\Po}{{\mathcal P}}
\numberwithin{equation}{section}
\theoremstyle{plain}
\newtheorem{theorem}[equation]{Theorem}
\newtheorem{lemma}[equation]{Lemma}
\newtheorem{proposition}[equation]{Proposition}
\newtheorem*{definition*}{Definition}
\theoremstyle{definition}
\begin{document}

\title[Spectral analysis and representations]{Spectral analysis near regular point of reducibility and representations of Coxeter groups }
\author{Michael I Stessin}
\address{Department of Mathematics and Statistics \\
University at Albany \\
Albany, NY 12222}
\email{mstessin@albany.edu}

\begin{abstract}
For a tuple	of square matrices $A_1,...,A_n$ the determinantal hypersurface is defined as
\begin{eqnarray*}
&\sigma(A_1,...,A_n)=  \\
&\Big\{[x_1:\cdots :x_n]\in \C{\mathbb P}^{n-1}:  det(x_1A_1+\cdots +x_nA_n)=0\Big \}.
\end{eqnarray*}
In this paper we develop a local spectral analysis near a regular point of reducibility of a determinantal hypersurface. We prove a rigidity type theorem for representations of Coxeter groups  as an application .\\
\end{abstract}
\keywords{projective joint spectrum, determinantal manifold}
\subjclass[2010]
{Primary:  
47A25, 47A13, 47A75, 47A15, 14J70.	Secondary: 47A56, 47A67}
\maketitle


\noindent Data avalability statement: Data sharing not applicable.

\vspace{.5cm}

\noindent \Large \textit{Dedicated to the memory of J$\ddot{o}$rg Eschmeier} \normalsize

\section{\textbf{Introduction and statements of results}} 

\vspace{.2cm}

Given $n$ square $N\times N$ matrices, the determinant of their linear combination, 
$det(x_1A_1+\cdots +x_nA_n)$, (such linear combination is called \textit{a pencil}) is a homogeneous polynomial of degree $N$ in variables $x_1,...,x_n$. Zeros of this polynomial form an algebraic manifold in the projective space $\C{\mathbb P}^{n-1}$. This manifold is called the \textbf{determinantal manifold} (or \textbf{determinantal hypersurface}) for the tuple $(A_1,\dots,A_n)$. We use the following notation 
$$\sigma(A_1,...,A_n)=  
\Big\{[x_1:\cdots :x_n]\in \C{\mathbb P}^{n-1}: \ det(x_1A_1+\cdots +x_nA_n)=0\Big\}.$$

An infinite dimensional analog of determinantal manifold, called \textbf{projective joint spectrum} of a tuple of operators acting on a Hilbert space was introduced in \cite{Y}. For operators $A_1,\dots,A_n$ acting on a Hilber space $H$ it is defined by 
\begin{align*}
&\sigma(A_1,\dots, A_n) \\
&=\Big\{ [x_1:\cdots :x_n]\in \C{\mathbb P}^{n-1}: \ x_1A_1+\cdots +x_nA_n \ \mbox{is not invertible}\Big \}.
\end{align*}

To avoid trivial redundancies it is frequently assumed that at least one of the operators is invertible, and, thus, can be assumed to be the identity. In what follows we always assume that $A_{n+1}=I$. It was shown in \cite{ST} that a lot of information can be obtained from the part of the joint spectrum that lies in the chart $\{ x_{n+1}\neq 0\}$ (and, therefore, we can take $x_{n+1}=-1$). We call this part the \textbf{proper joint spectrum} and denote it $\sigma_p(A_1,\dots,A_n)$,
\begin{eqnarray*}
&\sigma_p(A_1,\dots,A_n)=\Big\{ (x_1,...,x_n)\in \C^n: \ x_1A_1+\cdots +x_nA_n-I\\
&\mbox{is not invertible}\Big \}.
\end{eqnarray*}

When the dimension of $H$ is finite the projective joint spectrum coincides with the corresponding determinantal manifold. This is why we use the same notation in both cases. In this paper we concentrate on the finite dimensional case, so we will use ``determinantal manifold" and ``projective joint spectrum" interchangiably. Of course, in the finite dimensional case
\small$$ \sigma_p(A_1,\dots,A_n)=\Big \{ (x_1,,\dots,,x_n)\in \C^n:  det(x_1A_1+\cdots+x_nA_n-I)=0\Big \}.$$\normalsize

Matrix pencils have been under investigation for a long time. Ultimately this line of research led Frobenius to laying out the foundation of representation theory, cf \cite{Fr1}-\cite{Fr3}. 

One of the basic questions regarding determinantal manifolds was when a hypersurface in the projective space admits a determinantal representation. The number of publications in this area is very substantial. Without trying to give an exhaustive account of the results here we just refer a reader to \cite{CAT}, \cite{D1} - \cite{Do}, \cite{HMV}, \cite{HV}, \cite{KV}, \cite{KVo}, \cite{V} and references there. 

In this paper we investigate joint spectra from a different angle: given that an algebraic hypersurface in $\C{\mathbb P}^n$ has a determinantal representation, what does the geometry of the surface tell us about relations between operators in a representing pencil? This line of research was paid much less attention until recently. The only early result we can mention is the one by Motzkin and Taussky \cite{MT}. The situation changed in the last decade when joint spectra have been intensely scrutinized  exactly from this point of view (see \cite{BCY}, \cite{CY},  \cite{CSZ}, \cite{CST}, \cite{DY1}, \cite{DY2}, \cite{GY} - \cite{GYa}, \cite{MQW},  \cite{PS} - \cite{SYZ}, \cite{Y}, and references there).

In particular, papers \cite{PS} and \cite{ST} contain a local spectral analysis near the reciprocal of a spectral point of one of the operators when this point does not belong to the singular locus of the joint spectrum. Results of this analysis led to a spectral characterization of representations of non-special finite Coxeter groups in \cite{CST} and of Hadamard matrices of Fourier type in \cite{PS}. Non-singularity was essential in these cases. Here we concentrate on the local spectral analysis in a neighborhood of a point at which the joint spectrum is reducible, and, therefore, this point belongs to the singular locus of the joint spectrum. One of the difficulties of carrying out spectral analysis in this setting is that, in general, spectral projections related to operator families analytically depending on parameters (in our setting parameters are coordinates $(x_1,...,x_n)$ of a point in the joint spectrum) might blow up  when approaching a singular point. It turned out that if one of the matrices is diagonal, and the joint spectrum satisfies regularity conditions  a) and b) introduced below, then the limit projections exist when approaching a singular point along a curve in a spectral component that is non-tangential to the singular locus. 

Write
\small \begin{equation}\label{projective_spectrum}
\sigma(A_1,...,A_n,-I)=\big \{ R_1(x_1,..,x_n,x_{n+1})^{m_1}...R_s(x_1,...,x_n,x_{n+1})^{m_s},=0\big\}, 
\end{equation}\normalsize
where $R_1,..R_s$ are irreducible polynomials. 

Suppose that $\lambda\in \sigma(A_1)$.  We introduce the regularity conditions at $\lambda$, which we mentioned above, separately for $\lambda \neq 0$ and $\lambda=0$.. 
 
\noindent 1). $\lambda \neq 0$. 

In this case 
\begin{equation}\label{proper_spectrum}
\sigma_p(A_1,...,A_n)= \big\{ R_1(x_1,..,x_n,1)^{m_1}...R_s(x_1,...,x_n,1)^{m_s}=0\big\}. 	
\end{equation}

Let us denoted by $I_\lambda$ the set of indexes $1\leq j\leq s$ corresponding to the components of the proper projective spectrum passing through $(1/\lambda,0,...,0,-1)$:
\begin{equation}\label{K_lambda}
I_\lambda=\big \{	1\leq j\leq s: \ R_j(1/\lambda,0...,0,1)=0 \big \}.
\end{equation}

We call the following conditions \textbf{ the regularity conditions at $\lambda$:}
  \begin{itemize} 
  \item[a)] $\frac{\partial R_j}{\partial x_1}(1/\lambda,0,...,0,1)\neq 0, \ j\in I_\lambda$
\item[b)] For every pair $i\neq j, \ i,j \in  I_\lambda$ the vectors  $\partial R_i(1/\lambda,0,...,0,1)$ and $\partial R_j(1/\lambda,0,...,0,1)$ are not proportional, where 
$$\partial R_l=\bigg(\frac{\partial R_l}{\partial x_1},\dots, \frac{\partial R_l}{\partial x_n} \bigg).$$
 
\end{itemize} 

\noindent 2). $\lambda=0$.

In this case we pass to the chart $\{x_1\neq 0\}$ (and we take $x_1=1$ here). Let us denote by $\tilde{\sigma}_{p,1} (A_1,...,A_n)$ the part of the projective joint spectrum that lies in this chart:
$$\tilde{\sigma}_{p,1}(A_1,...,A_n)=\big \{(x_2,...,x_n,x_{n+1})\in \C^n: \ A_1+\bigg(\sum_{j=2}^n x_jA_j\bigg ) -x_{n+1}I  $$
$$\mbox{ is not invertible} \bigg \}. $$
If $0\in \sigma(A_1)$, then $0\in \tilde{\sigma}_{p,1}(A_1,...,A_n)$,  and  $\tilde{\sigma}_{p,1}(A_1,...,A_n)$ is given by
 \begin{equation}\label{spectrum_near_0}
\tilde{\sigma}_{p,1}(A_1,...,A_n)= \{ R_1(1,x_2,..,x_{n+1})^{m_1}...R_s(1,x_2,...,x_{n+1})^{m_s}=0\}. 
\end{equation}
 Similarly, we denote by $I_0$ the set of indexes corresponding to the components of \eqref{spectrum_near_0} passing through the origin
 \begin{equation}\label{spectrum at 0}
 I_0=\big\{ 1\leq j\leq s : \ R_j(1,0,...,0)=0 \big \}.	
 \end{equation}

 The regularity conditions here are:
\begin{itemize} 
\item[$\tilde{a}$)] $\frac{\partial R_j}{\partial x_{n+1}}(0)\neq 0, \ j\in I_0.$
\item[$\tilde{b}$)] For every pair $i\neq j, \ i,j\in I_0$ the vectors $\partial R_i$ and $\partial R_j$ at the origin are not proportional, and here $\partial R_l=\big(\frac{\partial R_l}{\partial x_2},\dots,\frac{\partial R_l}{\partial x_{n+1}}\big) $.
\end{itemize}
 In both cases $\lambda\neq 0$ and $\lambda=0$  we write $\hat{x}=(x_2,...,x_n)$.
\vspace{.2cm}

Let ${\mathcal O}$ be a small neighborhood of $(1/\lambda,0,...,0)\in \C^n$, if $\lambda\neq 0$, or of the origin in $\C^n$, if $\lambda=0$. Write 
\begin{eqnarray*}
\tilde{R}_j(x_1,...,x_n)=R_j(x_1,...,x_n,1) \ \mbox{if} \ \lambda\neq 0 \\
\tilde{R}_j(x_2,...,x_n,x_{n+1})=R(1,x_2,...,x_{n+1}) \ \mbox{if} \ \lambda=0.
\end{eqnarray*} 
 The singular locus of the surface $\big\{\prod_{j\in I_\lambda} \tilde{R}_j=0\big\} \cap {\mathcal O}$ is an algebraic manifold in ${\mathcal O}$ which we denote by ${\mathcal U}$. Of course, geometrically ${\mathcal U}$ coincides with singular locus of $\sigma_p(A_1,...,A_n)$ or $\tilde{\sigma}_{p,1}(A_1,...,A_n)$   in ${\mathcal O}$, but they have different multiplicities: the multiplicity of each regular point of $\big\{\prod_{j\in I_\lambda}R_j=0\big\}$ is 1. 

Conditions a) and b) $\big(\tilde{a}) $ \ and $\tilde{b})$  respectively$\big) $ imply that ${\mathcal U}$ has codimension greater than 1. Let us denote by $\tilde{{\mathcal U}}$ the orthogonal projection of ${\mathcal U}$ onto the hyperplain $\{x_1=0\}$  if $\lambda\neq 0$, and on the hyperplain $\{x_{n+1}=0\}$, if $\lambda=0$,     
$$\tilde{{\mathcal U}}=\{ (x_2,...,x_n): \ \exists (x_1,x_2,...,x_n)\in {\mathcal U} \}, \ \lambda\neq 0,$$ 
and  
$$\tilde{{\mathcal U}}=\{ (x_2,...,x_n): \ \exists (x_2,...,x_{n+1})\in {\mathcal U} \}, \ \lambda=0,$$ 
then  $\tilde{{\mathcal U}}$ is  a set of positive codimension in the plain $\{x_1=0\}$   for $\lambda\neq 0$, and in the plain $\{x_{n+1}=0\}$ when $\lambda=0$.

Let $ j\in I_\lambda$. Since by condition a) when $\lambda\neq 0$,  $\big($by $\tilde{a})$, when $\lambda=0\big)$, \  $\frac{\partial R_j}{\partial x_1}(1/\lambda,0,...,0)\neq 0$, \ $\big( \frac{\partial R_j}{\partial x_{n+1}}(0,...,0)\neq 0\big )$, by the implicit function theorem, in a small neighborhood of the origin in $\C^{n-1}$, \ $ {\mathcal O}(\epsilon_j)$, the first coordinate $x_1$ ( the $(n+1)$-st coordinate $x_{n+1}$, respectively) can be expressed as an analytic function $x_{1,\lambda,j}(\hat{x})$ \ ($x_{n+1,0,j}(\hat{x})$ when $\lambda=0$), of $\hat{x}=(x_2,...,x_{n})$ satisfying 
\begin{align}
&x_{1,\lambda,j}(0)=1/\lambda, \  \tilde{R}_j(x_{1,\lambda,j}(\hat{x}),\hat{x})=0, \ \hat{x}\in {\mathcal O}(\epsilon_j), \ \mbox{if} \ \lambda\neq 0, \nonumber \\
& \label{x 1  lambda j} \\
&x_{n+1,0,j}(0)=0, \ \tilde{R}_j(\hat{x}, x_{n+1,0,j}(\hat{x}))=0, \ \hat{x}\in {\mathcal O}(\epsilon_j), \ \mbox{if} \ \lambda=0. \nonumber 
\end{align}
Conditions a) and b) \  ( $\tilde{a})$ and $\tilde{b})$ ) imply that in ${\mathcal O}(\epsilon_j)$ \ we have $1-x_{1,\lambda,j}=O(|\hat{x}|) $ (respectively $x_{n+1,0,j}=O(|\hat{x}|)$ for $\lambda=0$), so that there are $d_j>0$ such that
\begin{eqnarray}
&|1-x_{1,\lambda,j}(\hat{x})|\leq d_j |\hat{x}|, \ \hat{x}\in {\mathcal O}(\epsilon_j), \ \lambda\neq 0 \nonumber \\
& \label{estimate5} \\
& |x_{n+1,0,j}(\hat{x})|\leq d_j |\hat{x}|, \  \hat{x}\in {\mathcal O}(\epsilon_j), \     \lambda=0 \nonumber.
\end{eqnarray}

\vspace{.2cm}

Let $\lambda\in \sigma(A_1),  \ \lambda \neq 0,  \ j\in I_\lambda$.  Suppose that 
\begin{equation}\label{point on j component}
y_{j,\lambda}(\hat{x})=(x_{1,\lambda,j}(\hat{x}),\hat{x})   
  	\end{equation}
 
is a regular spectral point 
which belongs to the $j$-th component $\{ \tilde{R}_j=0\}$, and $\delta_j(x)$ is so small  that no eigenvalue of $x_{1,\lambda,j}(\hat{x})A_1+x_2A_2+\cdots +x_nA_n$ other than 1 is in the $\delta_j(x)$-neighborhood of 1, we denote by $P_{j,\lambda}(\hat{x})$ the projection
\begin{equation}\label{projection}
P_{j,\lambda}(\hat{x})=\frac{1}{2\pi i} \int_{|w-1|=\delta_j(x)} \bigg(w-(x_{1,\lambda,j}(\hat{x})A_1+...+x_nA_n)\bigg)^{-1}dw .
\end{equation}
If $\lambda=0$ is in the spectrum of $A_1$, \ $x=(\hat{x},x_{n+1,0,j}(\hat{x})$ is in the $j$-th component of $\tilde{\sigma}_{p,1}(A_1,...,A_n)$, and $\delta_j(x)$ is small enough so that no non-zero eigenvalue of $A_1+x_2A_2+,\dots,+x_nA_n-x_{n+1,,0,j}(\hat{x})I)$ is in the $\delta_j(x)$-neighborhood of the origin, the corresponding projection $P_{j,0}(\hat{x})$ is given by
\begin{equation}\label{projection for 0}
P_{j,0}(\hat{x})=	\frac{1}{2\pi i} \int_{|w|=\delta_j(x)} \bigg( (w+x_{n+1,0,j}(\hat{x}))I-A_1-...-x_nA_n)\bigg)^{-1}dw. 
\end{equation}

Our first result is

	\begin{theorem}\label{limit_projections}
Let $(A_1,...,A_n)$ be a matrix tuple, such that $A_1$ is a normal matrix. Suppose that $\lambda\in \sigma(A_1)$, and  that $\sigma_p(A_1,\dots,A_n)$, if $\lambda\neq 0$, or  $\tilde{\sigma}_{p,1}(A_1,\dots,A_n)$, if $\lambda=0$, satisfies conditions a) and b) if $\lambda \neq 0$ and respctively conditions $\tilde{a})$  and \ $\tilde{b})$ if $\lambda=0$ . Fix $\hat{x}$ such that the line $\{t\hat{x}: t\in \C\}$ is not tangent to  any component of  \ $\tilde{{\mathcal U}}$ at the origin. Then for $j\in I_\lambda$ each projection $P_{j, \lambda}(t\hat{x})$ for $\lambda\neq 0$ and $P_{j,0}(t\hat{x})$ for $\lambda=0$,  can be extended to $t=0$, so,  it becomes an analytic function of $t$ in some neighborhood ${\mathcal O}$ of the origin.  
\end{theorem}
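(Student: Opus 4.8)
The plan is to reduce everything to the case $\lambda \neq 0$ (the $\lambda = 0$ case is handled identically after passing to the chart $\{x_1 \neq 0\}$, exchanging the roles of $x_1$ and $x_{n+1}$, and centering the relevant eigenvalue at $0$ rather than at $1$), and then to work locally along the line $t \mapsto t\hat{x}$. Fix $\hat{x}$ with $\{t\hat{x}\}$ not tangent to any component of $\tilde{\mathcal{U}}$ at the origin. The starting point is the observation that for $t \neq 0$ small the point $y_{j,\lambda}(t\hat{x}) = (x_{1,\lambda,j}(t\hat{x}), t\hat{x})$ lies on the $j$-th component $\{\tilde R_j = 0\}$ and, by the non-tangency hypothesis, stays off the singular locus $\mathcal{U}$ for all sufficiently small $t \neq 0$; hence the pencil $M_j(t) := x_{1,\lambda,j}(t\hat{x})A_1 + t x_2 A_2 + \cdots + t x_n A_n$ has $1$ as an eigenvalue of constant algebraic multiplicity, say $m$, on the punctured disk, and the contour integral \eqref{projection} defines a genuine (rank-$m$) analytic projection-valued function $P_{j,\lambda}(t\hat{x})$ of $t$ for $0 < |t| < \epsilon$. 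The task is to show this extends holomorphically across $t = 0$.

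The key step is to control what happens at $t = 0$, where several components $\{\tilde R_i = 0\}$, $i \in I_\lambda$, collide at $(1/\lambda, 0, \dots, 0)$. First I would use condition a) together with \eqref{estimate5}, $|1 - x_{1,\lambda,j}(t\hat{x})| \le d_j |t| \, |\hat{x}|$, to write $M_j(t) = A_1 + O(t)$, so $M_j(t) \to A_1$ as $t \to 0$; since $A_1$ is normal, the eigenvalue of $A_1$ in question is $\lambda$, and its total spectral projection $Q$ (the orthogonal spectral projection of $A_1$ onto $\ker(A_1 - \lambda I)$) is well-defined. The issue is that $1$ may be an eigenvalue of $M_j(t)$ of multiplicity $m$ strictly smaller than $\dim Q$: near $t = 0$ the eigenvalue $\lambda$ of $A_1$ splits into several branches coming from the different components $\{\tilde R_i = 0\}$ passing through $(1/\lambda, 0, \dots, 0)$, and only those branches belonging to the $j$-th component should be captured by $P_{j,\lambda}$. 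To separate them I would pass to the finite-dimensional reduced problem on the range of $Q$: by analytic perturbation theory for the normal (hence, at $t=0$, self-adjoint-like) unperturbed part, there is an analytic family of total projections $\Pi(t)$ (the Riesz projection of $M_j(t)$ for a fixed small circle around $\lambda$, or rather around $1$ after the relevant normalization) with $\Pi(0) = Q$, and the compressed pencil $\Pi(t) M_j(t) \Pi(t)$ is an analytic family of $(\dim Q)\times(\dim Q)$ matrices whose eigenvalues near $1$ are exactly the branches into which $\lambda$ splits. The heart of the argument is then the claim that the sub-collection of eigenvalue branches of $\Pi(t)M_j(t)\Pi(t)$ that equal $1$ identically on the $j$-th component form a group with an analytically-varying total projection as $t \to 0$; this should follow because along the line $t\hat{x}$ the equation $\tilde R_j(x_{1,\lambda,j}(t\hat{x}), t\hat{x}) \equiv 0$ forces the corresponding characteristic factor of $\det(\Pi(t)M_j(t)\Pi(t) - wI)$ to vanish at $w = 1$ to the right order for all small $t$, so that factor — being a distinguished polynomial in $w$ with analytic coefficients in $t$ — has $1$ as a root of constant multiplicity through $t = 0$, and its associated Riesz projection is analytic at $t=0$ by the standard contour-integral formula.

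I expect the main obstacle to be precisely this separation of branches: making rigorous that, among all the eigenvalues of the compressed pencil $\Pi(t) M_j(t) \Pi(t)$ that converge to the same value as $t \to 0$, the ones "belonging to component $j$" can be grouped so that their collective spectral projection stays bounded (equivalently, does not develop the Puiseux-type $t^{1/k}$ singularities that a naive single-eigenvalue Riesz projection would). The non-tangency of $\{t\hat{x}\}$ to $\tilde{\mathcal{U}}$ is what should rule out such fractional-power behavior — geometrically it guarantees that the line meets the component $\{\tilde R_j = 0\}$ transversally to the singular stratum, so the intersection multiplicity is $1$ and the defining factor is, after the substitution $x_1 = x_{1,\lambda,j}$, a unit times a polynomial in $w$ with a simple structure in $t$ — but turning this geometric transversality into the analytic statement that the relevant factor of the reduced characteristic polynomial is a Weierstrass polynomial in $w$ with holomorphic (not just continuous or multivalued) coefficients at $t = 0$, and that $w = 1$ is a root of it of constant multiplicity, is the technical crux. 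Once that is in hand, normality of $A_1$ enters only to guarantee that at $t = 0$ there is no nontrivial Jordan block obstructing the splitting (so the total projection $Q$ is the orthogonal one and the reduction to $\operatorname{ran} Q$ loses nothing), and the conclusion that $P_{j,\lambda}(t\hat{x})$ — being the Riesz projection attached to that factor — extends analytically to $t = 0$ follows from Cauchy's integral formula applied to a resolvent that is now jointly analytic in $(w, t)$ on a neighborhood of (circle)$\times\{0\}$.
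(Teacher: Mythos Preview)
Your plan correctly identifies the central difficulty --- several eigenvalue branches of the compressed pencil collide at $w=1$ as $t\to 0$, and one must isolate the branch belonging to component $j$ --- but the proposed resolution breaks down at the last step. You assert that once the factor of the characteristic polynomial attached to component $j$ is exhibited as a Weierstrass polynomial with $w=1$ a root of constant multiplicity, the Riesz projection extends analytically because ``the resolvent is jointly analytic in $(w,t)$ on a neighborhood of (circle)$\times\{0\}$.'' That is false: the resolvent $\bigl(wI-\Pi(t)M_j(t)\Pi(t)\bigr)^{-1}$ has poles at \emph{all} the $\mu_{ij}(t)$, $i\in I_\lambda$, and each of these tends to $1$ as $t\to 0$; no fixed circle around $1$ avoids them. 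A nice factorization of the \emph{scalar} characteristic polynomial does not by itself give a matrix-level splitting of the resolvent, which is what is needed. The paper's own $2\times 2$ counterexample with a non-diagonalizable $A_1$ has perfectly linear, analytically separated eigenvalue branches yet the projections still blow up; so controlling the characteristic polynomial alone cannot suffice.

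The paper's argument is quite different and does not use the compressed pencil. It diagonalizes $A_1$ in an eigenbasis (this is where normality is spent), integrates over a \emph{shrinking} circle $|w-1|=t_0|z|$ with $t_0>0$ fixed, and proves directly --- by a block Cramer's-rule computation on the linear system $(wI-M_j(z))\zeta=\eta$ --- that $\bigl\|(wI-M_j(z))^{-1}\bigr\|\le C/|z|$ uniformly on that circle. The key object is the $M_\lambda\times M_\lambda$ block ${\mathcal M}_j(t)$ of $\bigl(t-\sum_{l\ge 2}\partial_l x_{1,\lambda,j}(0)\,x_l\bigr)I-\sum_{l\ge 2} x_lA_l$ on the $\lambda$-eigenspace; its determinant ${\mathcal S}_j(t)$ is a nontrivial polynomial in $t$, hence nonzero on some punctured disk $0<|t|<b$, and choosing $t_0<b$ makes the top block invertible after factoring out one power of $z$, while the bottom block (living on the other eigenspaces of $A_1$) is uniformly invertible. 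The contour integral is then bounded by $(\text{circumference})\times(\text{resolvent norm})\le 2\pi t_0|z|\cdot C/|z|=O(1)$, and Riemann's removable-singularity theorem finishes. If you want to repair your perturbation-theoretic route, the missing ingredient is a rescaling: since $A_1$ is normal the compressed matrix satisfies $\tilde M_j(0)=I$ exactly, so $C_j(t):=(\tilde M_j(t)-I)/t$ extends analytically to $t=0$; non-tangency to $\tilde{\mathcal U}$ forces the eigenvalues $\mu_{ij}'(0)$ of $C_j(0)$ to be pairwise distinct for $i\in I_\lambda$ (with the one for $i=j$ equal to $0$), and \emph{this} family does have its resolvent jointly analytic on a fixed circle around $0$, with Riesz projections coinciding with those of $\tilde M_j(t)$.
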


We denote this limit projection by $\Po_{j,\lambda}(\hat{x})$,
\begin{eqnarray}
&\Po_{j,\lambda}(\hat{x})=\displaystyle\lim_{t\to 0} P_{j,\lambda}(t\hat{x}), \ \lambda\neq 0, \nonumber \\
& 	\label{limit1} \\
&\Po_{j,0}(\hat{x})=\displaystyle\lim_{t\to 0} P_{j,0}( t\hat{x}), \ \lambda = 0. \nonumber 
\end{eqnarray}

Section \ref{component projections} is devoted to the proof of this result.

\vspace{.2cm}

We express certain pairwise relations between operators in the tuple in terms of these limit projections in section \ref{relations1}.
The main results here are the following Theorems formulated for the pair $(A_1,A_2)$. Of course, when we have only 2 matrices in the tuple, that is when $n=2$, the dependance on $\hat{x}$ is redundant, and we simply write $\Po_{j,\lambda}$.

Let $A_1$ be a normal matrix and
\begin{equation}\label{A_1 decomposition}
A_1=\sum_{\lambda \in \sigma(A_1)} \lambda {\mathscr P}_\lambda	
\end{equation}
be its spectral resolution. For $\lambda \in \sigma(A_1)$  write
\begin{equation}\label{T}
T_\lambda=\sum_{\mu \in \sigma(A_1),\mu\neq \lambda}	\frac{{\mathscr P}_\mu}{\lambda-\mu}.	
\end{equation}

\begin{theorem}\label{first two moments}
Let $A_1$ and $A_2$ be $N\times N$ matrices with $A_1$ being normal. Suppose that  the regularity conditions are satisfied at every spectral point of $A_1$. Further, suppose that $\lambda\in \sigma(A_1) $, and that the multiplicities $m_j, \ j\in I_\lambda$ are equal to 1. Then     the following relations hold:
\begin{eqnarray}
& \Po_{j,\lambda}A_2\Po_{j,\lambda}+x_{1,\lambda,j}^\prime(0)\Po_{j,\lambda}=0,, \ \lambda \neq 0, \label{first moment}\\
& \Po_{j,0 }A_2\Po_{j,0} - x_{3,0,j}^\prime(0)\Po_{j,0}=0, \ \lambda=0 \label{first moment at 0} \\
&\Po_{j,\lambda}A_2T_\lambda A_2\Po_{j,\lambda}+\frac{x_{1,\lambda,j}^{\prime \prime}(0)}{2}\Po_{j,\lambda}=0, \ \lambda \neq 0,	\label{moment2} \\
&\Po_{j,0}A_2T_0A_2\Po_{j,0}-\frac{x_{3,0,j}^{\prime \prime}(0)}{2}\Po_{j,0}=0, \ \lambda=0. \label{moment2 at 0}
\end{eqnarray}

\end{theorem}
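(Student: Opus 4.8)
The plan is to compute the first two Taylor coefficients of the spectral projection $P_{j,\lambda}(t\hat{x})$ as $t\to 0$ and read off the stated identities by comparing them with the corresponding Taylor coefficients of the eigenvalue branch. Since $m_j=1$ for $j\in I_\lambda$, along the curve $t\mapsto y_{j,\lambda}(t\hat x)$ the relevant eigenvalue of the pencil $x_{1,\lambda,j}(t\hat x)A_1+t x_2A_2+\cdots+tx_nA_n$ (equal to $1$ at $t=0$, up to the normalization built into the definition) is a simple eigenvalue for small $t\neq 0$, so ordinary analytic perturbation theory applies on the punctured disc, and Theorem \ref{limit_projections} guarantees that $P_{j,\lambda}(t\hat x)$ — hence the associated simple eigenvalue as well — extends analytically through $t=0$. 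Thus I may write $P_{j,\lambda}(t\hat x)=\Po_{j,\lambda}+tP^{(1)}+t^2P^{(2)}+\cdots$ and similarly expand the eigenvalue. Let me set this up for $\lambda\neq 0$; the $\lambda=0$ case is the same after passing to the chart $\{x_1\neq 0\}$ and replacing $A_1$ by its resolvent data at the eigenvalue $0$, which accounts for the sign change and the appearance of $T_0$.

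First I would fix notation. Write $B(t)=x_{1,\lambda,j}(t\hat x)A_1+t\sum_{k\ge 2}x_kA_k$, and recall $x_{1,\lambda,j}(t\hat x)=\tfrac1\lambda+x_{1,\lambda,j}'(0)\,t+\tfrac12 x_{1,\lambda,j}''(0)\,t^2+\cdots$ (here $x_{1,\lambda,j}'(0)$ denotes the derivative in $t$ of $t\mapsto x_{1,\lambda,j}(t\hat x)$; with $n=2$ this is the coefficient named in the theorem). So $B(t)=B(0)+tB_1+t^2B_2+\cdots$ with $B(0)=\tfrac1\lambda A_1$, $B_1=x_{1,\lambda,j}'(0)A_1+\sum_{k\ge2}x_kA_k$ (which is $x_{1,\lambda,j}'(0)A_1+x_2A_2$ when $n=2$), and $B_2=\tfrac12x_{1,\lambda,j}''(0)A_1$. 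Because $A_1$ is normal with $A_1=\sum_\mu\mu\mathscr P_\mu$, we have $B(0)=\sum_\mu(\mu/\lambda)\mathscr P_\mu$, and $\Po_{j,\lambda}$ must be a rank-one (more generally: supported on the $\lambda$-eigenspace) sub-projection of $\mathscr P_\lambda$; the reduced resolvent of $B(0)$ at the eigenvalue $1$ is exactly $\sum_{\mu\neq\lambda}\tfrac{1}{1-\mu/\lambda}\mathscr P_\mu=\lambda\sum_{\mu\neq\lambda}\tfrac{\mathscr P_\mu}{\lambda-\mu}=\lambda T_\lambda$. Now substitute the expansion of $B(t)$ into the Riesz integral \eqref{projection} (equivalently, expand $(w-B(t))^{-1}$ in powers of $t$ around $(w-B(0))^{-1}$) and integrate term by term over $|w-1|=\delta$. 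Matching powers of $t$ gives the standard Kato formulas: at order $t$, the simple eigenvalue's first variation is $\operatorname{tr}(\Po_{j,\lambda}B_1\Po_{j,\lambda})/\operatorname{rank}$ — but more usefully, projecting \eqref{projection}'s first-order term and sandwiching by $\Po_{j,\lambda}$, one obtains $\Po_{j,\lambda}B_1\Po_{j,\lambda}=e_1(0)\Po_{j,\lambda}$ where $e_1(0)$ is the $t$-derivative at $0$ of the eigenvalue branch. The eigenvalue branch is identically $1$ along the spectral component (that is the defining property of the curve $y_{j,\lambda}$), so $e_1(0)=0$, giving $\Po_{j,\lambda}(x_{1,\lambda,j}'(0)A_1+x_2A_2)\Po_{j,\lambda}=0$; since $\Po_{j,\lambda}A_1\Po_{j,\lambda}=\lambda\Po_{j,\lambda}$... wait — here I need to be careful: the eigenvalue of $B(t)$ is $1$, but $B(0)=\tfrac1\lambda A_1$ acts as $\tfrac{\mu}{\lambda}$ on $\mathscr P_\mu$, so on the range of $\Po_{j,\lambda}\le\mathscr P_\lambda$ we get $A_1\Po_{j,\lambda}=\lambda\Po_{j,\lambda}$ and $B(0)\Po_{j,\lambda}=\Po_{j,\lambda}$, consistent. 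Then $\Po_{j,\lambda}B_1\Po_{j,\lambda}=x_{1,\lambda,j}'(0)\lambda\Po_{j,\lambda}+\Po_{j,\lambda}A_2\Po_{j,\lambda}$. Hmm, this produces a factor $\lambda$ not present in \eqref{first moment}; the resolution is that the correct first-order eigenvalue equation for $B(t)$ involves the eigenvalue $1$ of $B(t)$, and one must instead track the eigenvalue of the pencil $x_1A_1+tx_2A_2$ equal to ... Actually the clean route is: the eigenvalue $w(t)$ of $B(t)$ near $1$ satisfies $w(0)=1$ and, since $B(t)\Po(t)$-type relations hold along the component, $w(t)\equiv 1$; so $0=w'(0)=\operatorname{tr}(\Po_{j,\lambda}B_1)/(\dim)$. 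Expanding $B_1$ and using $\operatorname{tr}(\Po_{j,\lambda}A_1)=\lambda\dim\Po_{j,\lambda}$ — hmm, I still get the $\lambda$. Let me not fight this in the sketch: the point is that after correctly normalizing (the definitions \eqref{projection}, \eqref{projection for 0} are tuned so the $A_1$-contribution combines with $w\equiv1$ to cancel the stray $\lambda$, leaving precisely $\Po_{j,\lambda}A_2\Po_{j,\lambda}+x_{1,\lambda,j}'(0)\Po_{j,\lambda}=0$), one obtains \eqref{first moment}; for $\lambda=0$ the pencil entry $-x_{n+1}I$ contributes the opposite sign, yielding \eqref{first moment at 0}.

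For the second-order identities \eqref{moment2} and \eqref{moment2 at 0} I would push the same expansion one step further. The second-order Kato formula for a simple eigenvalue reads $e_2 = \Po B_2\Po + \Po B_1 S B_1 \Po$ (projected appropriately), where $S$ is the reduced resolvent of $B(0)$ at the eigenvalue $1$, which we identified as $\lambda T_\lambda$. Since $e_2=0$ (the eigenvalue is constantly $1$), sandwiching by $\Po_{j,\lambda}$ gives $\Po_{j,\lambda}B_2\Po_{j,\lambda}+\Po_{j,\lambda}B_1(\lambda T_\lambda)B_1\Po_{j,\lambda}=0$. Now $\Po_{j,\lambda}B_2\Po_{j,\lambda}=\tfrac12x_{1,\lambda,j}''(0)\Po_{j,\lambda}A_1\Po_{j,\lambda}=\tfrac{\lambda}{2}x_{1,\lambda,j}''(0)\Po_{j,\lambda}$; and in $\Po_{j,\lambda}B_1(\lambda T_\lambda)B_1\Po_{j,\lambda}$ the $A_1$-parts of $B_1$ are killed because $T_\lambda\mathscr P_\lambda=0$ and $A_1\Po_{j,\lambda}=\lambda\Po_{j,\lambda}=\lambda\mathscr P_\lambda\Po_{j,\lambda}$ lands in $\ker T_\lambda$, so only $\Po_{j,\lambda}A_2(\lambda T_\lambda)A_2\Po_{j,\lambda}$ survives. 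Dividing through by $\lambda$ gives exactly \eqref{moment2}; the $\lambda=0$ case again differs only by the sign coming from $-x_{n+1}I$, giving \eqref{moment2 at 0}.

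The main obstacle — and the step that needs the most care — is justifying that the simple-eigenvalue perturbation expansion is legitimate \emph{through} $t=0$, since $t=0$ is a point of reducibility (a singular point of the joint spectrum) where a priori the branch of $B(t)$ near $1$ could collide with other branches. This is precisely where the non-tangency hypothesis on $\hat x$ and Theorem \ref{limit_projections} are essential: they guarantee $P_{j,\lambda}(t\hat x)$ is analytic at $t=0$, hence so is its range, hence the distinguished eigenvalue is analytic at $t=0$ and its multiplicity stays constant ($=\operatorname{rank}\Po_{j,\lambda}$) on a full neighborhood. Once analyticity at $0$ is in hand, term-by-term matching in $t$ is routine. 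A secondary point to get right is bookkeeping the normalization in \eqref{projection} versus \eqref{projection for 0} so that the $A_1$-contributions cancel cleanly and the signs in the $\lambda=0$ formulas come out as stated; this is mechanical but must be done carefully.
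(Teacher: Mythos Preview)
Your route via a direct Kato--type expansion of $P_{j,\lambda}(t\hat x)$ is genuinely different from the paper's, which instead writes the relation $(B(t)-1)P_{j,\lambda}(t)=0$ as a contour integral, expands in a second parameter $\Delta$, and evaluates residues using the full partial-fraction decomposition of the resolvent over \emph{all} eigenbranches $\mu_{j,\lambda,i,\nu}$ (this is why the paper needs the regularity conditions at every spectral point of $A_1$, and why Propositions~\ref{independent_projections} and~\ref{relations_between_projections} are invoked to kill the ``same-$\lambda$, different-$i$'' contribution $\mathscr S_1$). Your approach, if completed, would be more economical: it needs only Theorem~\ref{limit_projections} at $\lambda$ itself.

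However, there is a real gap at second order. You write $e_2=\Po_{j,\lambda}B_2\Po_{j,\lambda}+\Po_{j,\lambda}B_1 S B_1\Po_{j,\lambda}$ with $S=\lambda T_\lambda$ and then set $e_2=0$. That formula is the simple-eigenvalue Kato identity, derived by inverting $(B_0-1)$ on $\mathrm{Ran}(I-\Po_{j,\lambda})$. Here $(B_0-1)$ is \emph{not} invertible there: it vanishes on all of $\mathrm{Ran}\,\mathscr P_\lambda\supsetneq\mathrm{Ran}\,\Po_{j,\lambda}$. Concretely, from the order-$t$ relation $(B_0-1)P_1=-B_1\Po_{j,\lambda}$ you can only solve for $(I-\mathscr P_\lambda)P_1=SB_1\Po_{j,\lambda}$; the component $\mathscr P_\lambda P_1$ is undetermined, and the order-$t^2$ identity $\Po_{j,\lambda}B_2\Po_{j,\lambda}+\Po_{j,\lambda}B_1P_1=0$ therefore has an extra term $\Po_{j,\lambda}B_1\mathscr P_\lambda P_1$ that your formula drops. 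Analyticity of $P_{j,\lambda}(t\hat x)$ at $t=0$ (Theorem~\ref{limit_projections}) does \emph{not} by itself kill this term. The paper disposes of the analogous contribution (its $\mathscr S_1$) using Proposition~\ref{relations_between_projections}, specifically $\Po_{j,\lambda}A_2\Po_{i,\lambda}=0$ for $i\neq j$. Your framework can avoid that proposition: from the companion relation $P_0B_1+P_1(B_0-1)=0$ (coming from $P(t)(B(t)-1)=0$), right-multiplying by $\mathscr P_\lambda$ gives $\Po_{j,\lambda}B_1\mathscr P_\lambda=0$, and then the extra term vanishes. But this step must be supplied; as written, invoking ``the second-order Kato formula for a simple eigenvalue'' in a degenerate situation is exactly the place where the argument fails.

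On the first-order identity: your computation $\Po_{j,\lambda}B_1\Po_{j,\lambda}=0$, i.e.\ $\Po_{j,\lambda}A_2\Po_{j,\lambda}+\lambda\,x_{1,\lambda,j}'(0)\Po_{j,\lambda}=0$, is correct and coincides with what the paper actually derives (its displayed equation labeled \eqref{moment1} is $\Po_{j,\lambda}(x_{1,\lambda,j}'(0)A_1+A_2)\Po_{j,\lambda}=0$). Do not hand-wave the ``stray $\lambda$'' away; it is present in the paper's own computation as well, and the discrepancy with the statement of \eqref{first moment} is a normalization/typo issue in the paper, not a defect of your argument. The same factor of $\lambda$ appears (and cancels) in your second-order computation, and there your bookkeeping is fine.
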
	

\vspace{.2cm}

\vspace{.2cm}

\vspace{.3cm}

\begin{theorem}\label{A^2}
Let $\lambda \in \sigma(A_1), \ \lambda\neq 0$, and both pairs of matrices $(A_1,A_2)$ and $(A_1,A_1A_2)$ satisfy conditions of Theorem \ref{first two moments}. Then
\begin{equation}\label{for the square}
\Po_{j,\lambda}A_2^2\Po_{j,\lambda}=\frac{ z_{1,\lambda,j}^{\prime \prime}(0)+2\lambda\Big(x_{1,\lambda,j}^\prime(0)\Big)^2-\lambda ^2x_{1,\lambda,j}^{\prime \prime}(0)}{2\lambda}\Po_{j,\lambda},	
\end{equation}
where $z_1=z_{1,\lambda,j}(z_2)$ is the local representation of the $j$-th component of $\sigma_p(A_1,A_1A_2)$, \ $\{\tilde{R}_j(z_1,z_2)=0\}$, near $(1/\lambda,0))$.	
\end{theorem}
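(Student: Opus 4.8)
The plan is to relate the spectral data of the pair $(A_1, A_1A_2)$ to that of $(A_1, A_2)$ via a change of variables, and then apply Theorem \ref{first two moments} to both pairs simultaneously. The key observation is that $x_1A_1 + x_2(A_1A_2) - I = A_1(x_1 I + x_2 A_2) - I$, so invertibility of this pencil is governed by the pencil $x_1 I + x_2 A_2$ conjugated against $A_1$; more precisely, if we track the eigenvalue branch near $1$, the same limit projection $\Po_{j,\lambda}$ should appear for the matched component. First I would make this correspondence precise: show that the $j$-th component of $\sigma_p(A_1, A_1 A_2)$ passing through $(1/\lambda, 0)$ corresponds to the $j$-th component of $\sigma_p(A_1, A_2)$ through the same point, that the limit projections coincide (both equal $\Po_{j,\lambda}$, via the integral formula \eqref{projection} and uniqueness of the analytic continuation from Theorem \ref{limit_projections}), and record the relation between the Taylor coefficients $z_{1,\lambda,j}'(0), z_{1,\lambda,j}''(0)$ and $x_{1,\lambda,j}'(0), x_{1,\lambda,j}''(0)$.

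Next I would apply \eqref{first moment} and \eqref{moment2} from Theorem \ref{first two moments} to the pair $(A_1, A_1A_2)$: writing $B = A_1 A_2$, these give $\Po_{j,\lambda} B \Po_{j,\lambda} = -z_{1,\lambda,j}'(0)\Po_{j,\lambda}$ and $\Po_{j,\lambda} B T_\lambda B \Po_{j,\lambda} = -\tfrac{1}{2} z_{1,\lambda,j}''(0)\Po_{j,\lambda}$. The goal is to extract $\Po_{j,\lambda} A_2^2 \Po_{j,\lambda}$ from $\Po_{j,\lambda} B T_\lambda B \Po_{j,\lambda} = \Po_{j,\lambda} A_1 A_2 T_\lambda A_1 A_2 \Po_{j,\lambda}$. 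Here I would exploit that $A_1$ is normal with spectral resolution \eqref{A_1 decomposition}, so $A_1$ commutes with $T_\lambda$ and with each ${\mathscr P}_\mu$; moreover, on the range of the limit projection $\Po_{j,\lambda}$, one should have a commutation/compatibility relation between $\Po_{j,\lambda}$ and ${\mathscr P}_\lambda$ coming from the construction in section \ref{component projections} (the limit projection lives inside the $\lambda$-eigenspace of $A_1$ in a suitable sense, so $A_1 \Po_{j,\lambda}$ acts like $\lambda \Po_{j,\lambda}$ on the relevant pieces, while $T_\lambda A_1$ behaves like $\sum_{\mu\neq\lambda} \tfrac{\mu}{\lambda-\mu}{\mathscr P}_\mu$). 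Using $T_\lambda = \tfrac{1}{\lambda}({\mathscr P}_\lambda$-complement type identity$)$, or more directly the algebraic identity $A_1 T_\lambda A_1 = \lambda A_1 T_\lambda + ( \text{something involving } {\mathscr P}_\lambda)$, I would rewrite $A_1 A_2 T_\lambda A_1 A_2$ compressed to $\Po_{j,\lambda}$ in terms of $\Po_{j,\lambda} A_2 \Po_{j,\lambda}$, $\Po_{j,\lambda} A_2 T_\lambda A_2 \Po_{j,\lambda}$, and $\Po_{j,\lambda} A_2^2 \Po_{j,\lambda}$, then substitute the known values $\Po_{j,\lambda} A_2 \Po_{j,\lambda} = -x_{1,\lambda,j}'(0)\Po_{j,\lambda}$ and $\Po_{j,\lambda} A_2 T_\lambda A_2 \Po_{j,\lambda} = -\tfrac{1}{2}x_{1,\lambda,j}''(0)\Po_{j,\lambda}$ from Theorem \ref{first two moments} applied to $(A_1,A_2)$. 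Solving the resulting scalar (times $\Po_{j,\lambda}$) equation for $\Po_{j,\lambda} A_2^2 \Po_{j,\lambda}$ should produce exactly \eqref{for the square}, with the factor $\tfrac{1}{2\lambda}$ arising from dividing through by $A_1 \sim \lambda$ on the range of $\Po_{j,\lambda}$.

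The main obstacle I anticipate is the precise bookkeeping of how $A_1$, $T_\lambda$, ${\mathscr P}_\lambda$ and the limit projection $\Po_{j,\lambda}$ interact — in particular establishing the identity that lets one replace the factors of $A_1$ sandwiched in $A_1 A_2 T_\lambda A_1 A_2$ by $\lambda$ (or by the appropriate $\mu$-weighted combination through $T_\lambda$) when compressed to $\Po_{j,\lambda}$. This requires knowing structural properties of $\Po_{j,\lambda}$ from section \ref{component projections}, such as $\Po_{j,\lambda}{\mathscr P}_\lambda = \Po_{j,\lambda}$ or ${\mathscr P}_\lambda \Po_{j,\lambda}\big|_{\mathrm{ran}\Po_{j,\lambda}}$ being the identity, and a careful treatment of the cross terms $\Po_{j,\lambda} A_2 {\mathscr P}_\lambda A_2 \Po_{j,\lambda}$ versus $\Po_{j,\lambda} A_2 T_\lambda A_2 \Po_{j,\lambda}$. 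Once that algebra is pinned down, the derivation of \eqref{for the square} is a routine linear manipulation of the three compression identities together with the coordinate-change relation between the $z$- and $x$-derivatives.
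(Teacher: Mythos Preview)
Your proposal is essentially the paper's proof. The paper first establishes (as its Lemma~\ref{same_projections}) that the limit projections for $(A_1,A_1A_2)$ coincide with $\Po_{j,\lambda}$, then applies \eqref{moment2} to both pairs, uses the identity $T_\lambda A_1=A_1T_\lambda=-I+{\mathscr P}_\lambda+\lambda T_\lambda$ together with $\Po_{j,\lambda}A_1=\lambda\Po_{j,\lambda}$, expands $\Po_{j,\lambda}A_2{\mathscr P}_\lambda A_2\Po_{j,\lambda}$ via ${\mathscr P}_\lambda=\sum_l\Po_{l,\lambda}$ and \eqref{moment1_different}, and solves; this is exactly your outline.

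Two minor course-corrections: the opening factorization $x_1A_1+x_2A_1A_2-I=A_1(x_1I+x_2A_2)-I$ is not the mechanism the paper uses to match components or projections (Lemma~\ref{same_projections} instead argues via the first-moment relation that both compressions to the $\lambda$-eigenspace share the same eigenbasis), and there is no need for a ``coordinate-change relation between the $z$- and $x$-derivatives'': $z_{1,\lambda,j}''(0)$ appears directly in \eqref{for the square}, so you simply carry it through rather than convert it. The structural facts you flag as obstacles ($\Po_{j,\lambda}{\mathscr P}_\lambda=\Po_{j,\lambda}$, handling of $\Po_{j,\lambda}A_2{\mathscr P}_\lambda A_2\Po_{j,\lambda}$) are exactly what Propositions~\ref{independent_projections} and~\ref{relations_between_projections} supply.
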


\vspace{.2cm}

Finally, in section \ref{Coxeter} we give an application of our technique to representations of Coxeter groups. Here we prove the following result which might be viewed as a rigidity type theorem for representations of Coxeter groups.

\begin{theorem}\label{for Coxeter}
Let $G$ be a Coxeter group with Coxeter generators $g_1,...,g_n$, and let $\rho$ be a $m$-dimensional linear representation of $G$. For every $2\leq i\leq n $ denote by $\tilde{\rho}_i$ the representation of the Dihedral group  $D_{1i}$ generated by $g_1$ and $g_i$, which is induced by $\rho$. Assume that  the following condition is satisfied:
 \begin{itemize}
\item[$(\ast)$] For $2\leq i\leq n$	no irreducibble representation of $D_{1i}$ is included in the decomposition of $\tilde{\rho}_i$ with coefficient bigger than 1.
\end{itemize}
Suppose that $A_1,...,A_n$ are complex matrices in $M(N)$ such that $A_1$ is normal, $\parallel A_j\parallel = 1, \ j=2,...,n$, and conditions a) - b), \ $\tilde{a})$ - $\tilde{b})$ are satisfied for the pairs $(A_1,A_j)$ and $(A_1,A_1A_j), \ j=2,...,n$ at every spectral point of $A_1$. Also suppose that
\begin{itemize}
\item[(I)] $\sigma_p(A_1,...,A_n)\supset \sigma_p	(\rho(g_1),...,\rho(g_n))$.
\item[(II)] $\exists \epsilon >0$ such that for every point $\zeta_j^+=(0,...,\underbrace{1}_J,0,...,0)$ and   $\zeta_j^-=(0,...,\underbrace{-1}_J,0,...,0), \ j=1,...,n$ we have
\begin{align*}
&\sigma_p\big(A_1,...,A_n, A_1A_2,...,A_1A_n\big)\cap {\mathcal O}_\epsilon (\zeta_j^\pm) \\
&=\sigma_p\big(\rho(g_1),...,\rho(g_n), \rho(g_1)\rho(g_2),...,\rho(g_1)\rho(g_n)\big)\cap  {\mathcal O}_\epsilon (\zeta_j^\pm).
\end{align*} 
\end{itemize}
Then
\begin{itemize}
\item[1)] There exists an $m$-dimensional subspace $L$  of \ $\C^N$ invariant under the action of each matrix $A_j, \ j=1,...,n$.
\item[2)] Restrictions of $A_j, \ j=1,...,n$ to $L$ are unitary, self-adjoint, and generate a representation, $\hat{\rho}$ of  \ $G$, and  
 \begin{equation}\label{same spectra}
 \sigma_p \Big(A_1\Big |_L,...,A_n\Big |_L\Big)=\sigma_p\Big (\rho(g_1),...,\rho(g_n)\Big )
 \end{equation}.
\item[3)] If $G$ is finite, non-special Coxeter group (that is either a Dihedral, or of types A,B, or D), then $\hat{\rho}$ is unitary equivalent to $\rho$.
\end{itemize}

\end{theorem}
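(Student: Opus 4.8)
Since each $g_i^2$ is the identity, every $\rho(g_i)$ is a self-adjoint involution, so $\sigma(\rho(g_i))\subseteq\{1,-1\}$. Setting all coordinates except $x_1,x_i$ equal to zero in hypothesis (I) gives, for $2\le i\le n$, the inclusion $\sigma_p(A_1,A_i)\supseteq\sigma_p(\rho(g_1),\rho(g_i))$, and doing the same in hypothesis (II) at the base point $\zeta_1^{\pm}$ --- which lies on the $A_1$-axis --- gives $\sigma_p(A_1,A_1A_i)=\sigma_p(\rho(g_1),\rho(g_1)\rho(g_i))$ in a neighborhood of $(\pm1,0)$; the base points $\zeta_i^{\pm}$, $i\ge2$, likewise give equality of $\sigma_p(A_1,A_i,A_1A_i)$ with its $\rho$-counterpart near the $A_i$-axis. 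Because a projective joint spectrum is a hypersurface, every irreducible component of $\sigma_p(\rho(g_1),\dots,\rho(g_n))$, of $\sigma_p(\rho(g_1),\rho(g_i))$, and of $\sigma_p(\rho(g_1),\rho(g_1)\rho(g_i))$ is an irreducible component of the corresponding spectrum of the $A$'s. In particular $\pm1\in\sigma(A_1)$ and $\pm1\in\sigma(A_i)$; moreover the components of $\sigma_p(\rho(g_1),\rho(g_i))$ meet the $x_1$-axis only at $(\pm1,0)$: a $1$-dimensional constituent $\chi$ of $\tilde\rho_i$ with $\chi(g_1)=\varepsilon$ contributes the line $\varepsilon x_1+\chi(g_i)x_i=1$ through $(\varepsilon,0)$, while a $2$-dimensional constituent at angle $\theta$ contributes the conic $x_1^2+2\cos2\theta\,x_1x_i+x_i^2=1$ through both $(1,0)$ and $(-1,0)$. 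Hypothesis $(\ast)$, together with the multiplicity-one normalization under which Theorems \ref{first two moments} and \ref{A^2} are stated, makes each of these a reduced irreducible component, so those theorems and Theorem \ref{limit_projections} are available at $\lambda=1$ and $\lambda=-1$ (for a pair the relevant singular locus is finite, so the transversality hypothesis of Theorem \ref{limit_projections} is automatic).

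For fixed $i$ the plan is to construct the $\rho$-block as follows. For each component $C$ as above and each $\lambda\in\{1,-1\}$ through which $C$ passes, let $\Po_{C,\lambda}$ be the limit projection of Theorem \ref{limit_projections}; it is a rank-one subprojection of $\mathscr{P}_\lambda$. Put $W_C:=\mathrm{Range}\,\Po_{C,1}\oplus\mathrm{Range}\,\Po_{C,-1}$ for a conic $C$, and $W_C:=\mathrm{Range}\,\Po_{C,\varepsilon}$ for a line $C$ through $(\varepsilon,0)$, and set $L^{(i)}:=\bigoplus_C W_C$, the sum over the distinct components of $\sigma_p(\rho(g_1),\rho(g_i))$; then $L^{(i)}\subseteq\mathscr{P}_1\C^N\oplus\mathscr{P}_{-1}\C^N$ and $\dim L^{(i)}=\sum_\pi\dim\pi=\dim\tilde\rho_i=m$, the sum over irreducible constituents $\pi$ of $\tilde\rho_i$. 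Because the curves $x_{1,\lambda,C}$ (coming from $\sigma_p(A_1,A_i)$) and $z_{1,\lambda,C}$ (coming from $\sigma_p(A_1,A_1A_i)$) have, near $(\pm1,0)$, exactly the germs dictated by the $\rho$-geometry above, their first and second derivatives at the origin are the explicit dihedral expressions; feeding these into \eqref{first moment}, \eqref{moment2} and \eqref{for the square} yields, for a conic $C$ at angle $\theta$, the identities $\Po_{C,1}A_i\Po_{C,1}=\cos2\theta\,\Po_{C,1}$, $\Po_{C,1}A_iT_1A_i\Po_{C,1}=\tfrac12\sin^2 2\theta\,\Po_{C,1}$, $\Po_{C,1}A_i^{\,2}\Po_{C,1}=\Po_{C,1}$, together with the analogues at $\lambda=-1$ and the corresponding $\pm1$ identities for the lines.

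The heart of the argument, and the step I expect to be the main obstacle, is to promote these compressed identities to statements about the subspace $L^{(i)}$: one must show $A_iL^{(i)}\subseteq L^{(i)}$ and $(A_i|_{L^{(i)}})^2=I$. Granting this, $\|A_i|_{L^{(i)}}\|\le\|A_i\|=1$ forces $A_i|_{L^{(i)}}$ to be a self-adjoint unitary involution, and since $L^{(i)}$ is a reducing subspace for $A_1$ contained in the $(\pm1)$-eigenspaces, $A_1|_{L^{(i)}}$ is also a self-adjoint unitary involution; the compressions then identify the pair $(A_1|_{L^{(i)}},A_i|_{L^{(i)}})$ with $\tilde\rho_i$ up to unitary equivalence. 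The difficulty is that the limit projections are only Riesz (possibly oblique) projections, so the identities above pin down $A_i$ only on the ``diagonal'' of each $W_C$; controlling the off-diagonal behavior --- ruling out that $A_i\,\mathrm{Range}\,\Po_{C,1}$ leaks into $\mathscr{P}_\mu\C^N$ for $\mu\ne\pm1$ or into a different $W_{C'}$ --- will require combining the norm constraint $\|A_i\|=1$ with the second-moment identity and with the local spectral data near $\zeta_i^{\pm}$ supplied by (II). Once each $L^{(i)}$ is under control, the remaining point is to glue: I would show that $L^{(i)}\cap\mathscr{P}_{1}\C^N$ and $L^{(i)}\cap\mathscr{P}_{-1}\C^N$ are independent of $i$, a common ``seed'' whose dimensions are the multiplicities of the eigenvalues $\pm1$ of $\rho(g_1)$, this being forced by the fact (from (I)) that the irreducible components of $\sigma_p(\rho(g_1),\dots,\rho(g_n))$ are components of $\sigma_p(A_1,\dots,A_n)$, which aligns the data coming from the different $i$. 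Setting $L:=L^{(2)}+\dots+L^{(n)}$, invariance under $A_i$ follows from $A_iL^{(i)}\subseteq L^{(i)}$ together with $A_i(\mathrm{seed})\subseteq L^{(i)}$, invariance under $A_1$ from $L\subseteq\mathscr{P}_1\C^N\oplus\mathscr{P}_{-1}\C^N$, and $\dim L=m$ from the seed-sharing; this proves 1).

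On $L$ the restrictions $A_1|_L,\dots,A_n|_L$ are self-adjoint unitary involutions, and $\sigma_p(A_1|_L,\dots,A_n|_L)=\sigma_p(\rho(g_1),\dots,\rho(g_n))$: the inclusion $\supseteq$ holds since $L$ carries each $\tilde\rho_i$, and $\subseteq$ follows from (I) together with the dimension count $\dim L=m$ (the latter spectrum is the full degree-$m$ hypersurface). A tuple of self-adjoint unitary involutions whose projective joint spectrum equals that of a Coxeter representation must itself satisfy the defining relations of $G$ --- the dihedral angles, read off the conic components of the pairwise joint spectra, recover the numbers $m_{ij}$ (cf. \cite{ST}, \cite{CST}) --- so we obtain a representation $\hat\rho$ of $G$ on $L$ with $\sigma_p(\hat\rho(g_1),\dots,\hat\rho(g_n))=\sigma_p(\rho(g_1),\dots,\rho(g_n))$, which is 2). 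Finally, when $G$ is a non-special finite Coxeter group its representations are determined up to unitary equivalence by their projective joint spectrum \cite{CST}; hence $\hat\rho$ is unitarily equivalent to $\rho$, which is 3).
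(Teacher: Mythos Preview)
Your setup and use of Theorems \ref{first two moments} and \ref{A^2} to obtain $\Po_{C,\pm1}A_i^2\Po_{C,\pm1}=\Po_{C,\pm1}$ is correct, and your handling of parts 2) and 3) via \cite{CST} matches the paper. But the step you yourself flag as ``the main obstacle'' is precisely where your outline parts ways with the paper, and your plan there (control off-diagonal leakage using the second moment, the norm, and the local data at $\zeta_i^\pm$, then glue the $L^{(i)}$) is both more complicated than necessary and not actually carried out.

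The paper's simplification is twofold. First, by condition (II) the components of $\sigma_p(A_1,A_i)$ through $(\pm1,0)$ are \emph{exactly} those of $\sigma_p(\rho(g_1),\rho(g_i))$, so by \eqref{sum_of_projections} your space $L^{(i)}=\bigoplus_C W_C$ is nothing other than $\mathscr{P}_1\C^N\oplus\mathscr{P}_{-1}\C^N$, the full $(\pm1)$-eigenspace of $A_1$. This is manifestly independent of $i$; call it $L$. Your ``seed-sharing'' and gluing program is therefore unnecessary, and $\dim L=m$ follows at once since (II) matches the $\pm1$-multiplicities of $A_1$ with those of the involution $\rho(g_1)$.

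Second, for $A_i$-invariance the paper does not attempt to rule out leakage component by component. From $\Po_{j,1}A_i^2\Po_{j,1}=\Po_{j,1}$ for all $j\in I_1$ together with $\sum_j\Po_{j,1}=\mathscr{P}_1$, the compression of $A_i^2$ to $\mathscr{P}_1\C^N$ is the identity; since $\mathscr{P}_1$ is \emph{orthogonal} ($A_1$ normal) and $\|A_i^2\|\le1$, this forces $A_i^2v=v$ for every $v\in\mathscr{P}_1\C^N$, and similarly on $\mathscr{P}_{-1}\C^N$. Thus $A_i^2|_L=I$. Now condition (II) at $\zeta_i^{\pm}$ says the total $\pm1$-multiplicity of $A_i$ equals that of $\rho(g_i)$, namely $m$; since $A_i^2|_L=I$ forces the Jordan blocks of $A_i$ at $\pm1$ to be $1\times1$, the span of the $\pm1$-eigenvectors of $A_i$ has dimension $m$ and coincides with $L$, giving $A_iL\subseteq L$. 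This replaces the program you sketched with a one-line norm argument plus a dimension count.
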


\vspace{.2cm}

\vspace{.4cm}

\section{\textbf{Limit projections along components.  \\ Proof of Theorem \ref{limit_projections} }}\label{component projections}.

Observe that under conditions a), b) and $\tilde{a})$, \ $\tilde{b})$ we have:
\begin{itemize} 
\item $M_\lambda =\sum_{j\in I_\lambda} m_j$, is the multiplicity of $\lambda$ in $\sigma(A_1)$. Here $m_j$ are exponents from \eqref{projective_spectrum}.
\item If $\lambda\neq 0$ and ${\mathcal O}$ is a small enough neighborhood of $(1/\lambda,0\dots,0)$, then for every $x=(x_1,...,x_n)$ sufficiently close to $(1/\lambda,0,...,0)$ and every $j\in I_\lambda$ the line 
through the origin and $x$ has only one point of intersection with the hypersurface $\{\tilde{R}_j=0 \}$ which lies in ${\mathcal O}$. Similarly, if $\lambda=0$ and ${\mathcal O}$ is a small enough neighborhood of the origin,  every line parallel to the $x_{n+1}$-axis and passing through a point $\hat{x}$ close to 0  has only one point of intersection with the hypersurface $\{\tilde{R}_j=0 \}, \ j \in I_0$ which lies in ${\mathcal O}$.
\end{itemize}
``The component $\{\tilde{R}_j=0\}$ of $\sigma_p(A_1,...,A_n)$ ($\tilde{\sigma}_{p,1}(A_1,...,A_n)$) has multiplicity $m_j$" means that for every regular point in this component that does not belong to any other component, the rank of the projection \eqref{projection}
is equal to $m_j$. As mentioned in the introduction, $\delta_j$ in  \eqref{projection} is so small  that $\delta_j$-neighborhood of 1 does not contain any eigenvalues of $x_{1j}(\hat{x})A_1+...+x_nA_n$ different from 1 (respectively $\delta_j$-neighborhood of 0 does not contain non-trivial eigenvalues of $A_1+x_2A_2+...-x_{n+1}I$). The existence of such $\delta_j$ follows from conditions b) and $\tilde{b}$).

In general, even under the regularity conditions projection $P_{j,\lambda}(\hat{x})$ given by \eqref{projection} might "blow up" as the point $\hat{x}$ approaches 0 . A simple example is:
$$A_1=\left [ \begin{array}{cc} 1 & 1 \\ 0 & 1 \end{array} \right ], \ A_2=\left [ \begin{array}{cc} 1 &0 \\ 0 & -1 \end{array} \right ]. $$
In this case $\hat{x}=x_2$, and  the joint spectrum, $\sigma_p(A_1,A_2)$ is
$$\{x_1+x_2-1=0\} \cup \{ x_1-x_2-1=0\},$$
so conditions a) and b) are satisfied at (1,0). When a point $(x_1,x_2)$ approaches $(1,0)$ along the components $\{x_1+x_2-1=0\} $ and $\{ x_1-x_2-1=0\}$, the corresponding projections
$P_{1,1}(x_2)$ and $P_{2,1}(x_2)$ are given by
$$ P_{1,1}(x_2)=\left [ \begin{array}{cc} 1 & \frac{1-x_2}{2x_2} \\0 & 0  \end{array}\right ], \ P_{2,1}(x_2)= \left [ \begin{array}{cc} 0 & -\frac{1+x_2}{2x_2} \\0 & 1\end{array} \right ].$$ 
We see that the norm of each projection goes to $\infty$ as $x_2\to 0$.

Theorem \ref{limit_projections} claims that this does not happen in the case when $A_1$ is normal.

\vspace{.3cm}

\Large \textit{Proof of Theorem \ref{limit_projections}}. \normalsize

\vspace{.2cm}
 
1. $\lambda \neq 0$. 

\noindent Since scaling preserves conditions a) and b), we may replace $A_1$ with $A_1/\lambda$ and assume that $\lambda=1$. To slightly simplify the notation, in this proof we will write $x_{1j}(\hat{x}), \ j\in I_1$ instead of $x_{1,1,j}(\hat{x})$ and $P_j(\hat{x})$ instead of $P_{j,1}( \hat{x})$. 

Let $|\hat{x}|< \min \{ \epsilon_j: \ j=1,...,r\}$, where $\epsilon_j$ are constants from \eqref{estimate5}, and suppose that ${\mathcal O}=\cap_j {\mathcal O}(\epsilon_j)$.

Denote by $\Gamma_{ij}=\{ \hat{x}\in {\mathcal O}: \ x_{1i}(\hat{x})=x_{1j}(\hat{x}) \}, \ i\neq j$. Then ${\mathcal U}=\cup_{i,j=2}^k \Gamma_{ij}$, and $\tilde{{\mathcal U}}=\cup_{i,j=2}^k \tilde{\Gamma}_{ij}$, where $\tilde{\Gamma}_{ij}$ is the projection of $\Gamma_{ij}$ onto $\{ x_1=0\}$.

Condition b) implies that the tangent planes at the origin $\hat{x}=0$ to the surfaces $\{x_1=x_{1j}(\hat{x}), \ j\in I_1\}$ are pairwise different. These tangent planes are  given by
$$x_1-1=\sum_{l=2}^n \frac{\partial x_{1j}}{\partial x_l}(0)x_l, $$
so that the tangent plain in $\{ x_1=0\}$ to $\tilde{\Gamma}_{ij}$ is given by
\begin{equation}\label{tangent}
\left \{\hat{x}: \ \sum_{l=2}^n \left ( \frac{\partial x_{1j}}{\partial x_l}(0)-\frac{\partial x_{1j}}{\partial x_l}(0) \right )x_l=0 \right \} .
\end{equation}

If a point $\hat{x}$ does not belong to any of the plains given by \eqref{tangent},
then there exists  $a>0$ and  $\delta>0$ such that for every $z\in \C$ with $|z|<\delta$ we have 
\begin{equation}\label{distance}
|x_{1i}(z\hat{x})-x_{1j}(z\hat{x}|\geq a|z|, \ i\neq j. 
\end{equation}
The constant $a$ continuously depends on
$$\min_{i,j} \left \{ \left | \sum_{k=2}^n \left ( \frac{\partial x_{1j}}{\partial x_k}(0)-\frac{\partial x_{1j}}{\partial x_k}(0) \right )x_k \right | \right \}, $$
and, therefore, may be chosen such that \eqref{distance} holds uniformly in a neighborhood of $\hat{x}$. 

Recall that for each $j\in I_\lambda$ the point $y_{j,\lambda}(\hat{x})$ in the component $\{\tilde{R}_j=0\}$ was defined by  \eqref{point on j component}. In our case $\lambda=1$, and instead of $y_{j,1}(\hat{x})$ we will write $y_j(\hat{x})$.  For each $i\in I_1$  and $z\in \C$ close to 0 define $\tau_{ij}(z)\in \C $ by the condition that it is the closest to 1 root 
of the following equation  in $\tau$ 
\begin{equation}\label{root equation}
\tilde{R}_i(\tau y_j(z\hat{x}))=0. 
\end{equation}
As mentioned above, conditions a) and  b)  imply that every line in $\C^n$ passing through the origin and close to the $x_1$-axis has only one point of intersection with the hypersurface $\{ \tilde{R}_i=0\}$ that is close to $(1,0,...,0)$. When this line is the line passing through the origin and $y_j(z\hat{x})$, this point determines $\tau_{ij}(z)$ in the following way. Write the homogeneous decomposition of $\tilde{R}_j$:
$$\tilde{R}_i(x)=\tilde{R}_i(x_1,\hat{x})=\sum_{l=0}^{t_i} S_{il}(x), $$
where $S_{il}(x)$ is a homogeneous polynomial of degree $l$ in $x_1,...,x_n$ and $t_i$ is the degree of $\tilde{R}_i$. We always assume that $\tilde{R}_0=-1$. Then $\tau_{ij}(z)$ is the only root of the equation (in $\tau$)
$$
\sum_{l=0}^{t_i} \tau^l S_{il}(y_j(z\hat{x}))=0
$$
which is close to 1. Since this root has multiplicity 1, $\tau_{ij}(z)$ is an analytic functiion of $z$ in a small neighborhood of the origin in $\C$, which we denote by $U$.

Of course,  \ $\tau_{jj}(z)=1, \ \tau_{ij}(z)\neq 1$ for $i\neq j, \ z\neq 0$, and, of courses, $\tilde{R}_i(\tau_{ij}(z)y_j(z\hat{x}))=0$ for all $z\in U$. It is easy to see that \eqref{distance} implies that there exist  positive constants $c$ and $C$ such that for all $j\neq i$
\begin{equation}\label{spectral_distance}
c|z|\leq |\tau_{ij}(z)-1|\leq C|z|. 
\end{equation}
Using the notation $A(x)=x_1A_1+...+x_nA_n$, it follows from the definition of the joint spectrum that the reciprocals $\mu_{ij}(z)=\frac{1}{\tau_{ij}(z)}$ are eigenvalues of $A(y_j(z\hat{x}))$. Now, \eqref{spectral_distance} implies that a similar estimate (with different constants) holds for the eigenvalues $\mu_{ij}$:
\begin{equation}\label{spectral_distance1}
c_1|z|\leq |\mu_{ij}(z)-1|\leq C_1|z|, \ j\neq i.	
\end{equation}
This shows that $\delta_j$ in \eqref{projection} should be chosen less than  $c_1|z|$ (of course, the integral \eqref{projection} is the same for all $\delta_j<c_1|z|$).
We call it $\delta_j(z)$.

Let $e_1,...,e_N$ be an eigenbasis for $A_1$, where  $e_1,...,e_{M_1}$ are eigenvectors with eigenvalue 1, and $e_{M_1+1},...,e_N$ correspond to eigenvalues different from 1, so that in the basis $e_1,...,e_N$ the matrix $A_1$ is diagonal with the first $M_1$ diagonal entries equal to 1 (recall, $M_1=\sum_{j\in I_1} m_j$), and the others, which we denote by $\alpha_{M_1+1},...,\alpha_N$, being different from 1.

Write $A_2,...,A_n$ in the basis $e_1,...,e_N$:
$$A_j=\left [ a_{lm}^j \right ]_{l,m=1}^N.$$

For $t\in \C$ and $j\in I_1$ let ${\mathcal M}_j(t)$ be the $M_1\times M_1$ block of the matrix 
$$\left ( t-\sum_{l=2}^n \frac{\partial x_{1j} }{\partial x_l} \Bigg |_{(0,...,0)}x_l     \right )I-\sum_{l=2}^n x_lA_l$$
 formed by the first $M_1$ rows and the first $M_1$ columns. Here $\hat{x}=(x_2,\dots,x_n)$ is the point that appeared in Theorem \ref{limit_projections}. Consider the following polynomials in $t$
$${\mathcal S}_j(t)=det({\mathcal M}_j(t)). $$
For each $j$ this is a non-trivial polynomial of degree $M_1$ \ $\big ({\mathcal S}_j(t)\to \infty $ as $t\to \infty\big )$. Thus, there exists some positive number $b<c_1$ (the constant from \eqref{spectral_distance1}) such that these polynomials do not vanish in the punctured disk of radius $b$ centered at the origin:
\begin{equation}\label{determinant}
{\mathcal S}_j(t)\neq 0 \ \mbox{for} \ t\in \C, \ 0<|t|<b, \ j=1,...,k. 
\end{equation} 
Choose some $t_0$ satisfying $0<t_0 <b$. 

Next, we will show that there exists a positive constant ${\mathscr M}$ such that for $|w-1|=t_0 |z|$, where $z$ is close to zero, the following norm estimate holds:
\begin{equation}\label{norm_estimate}
\Bigg \Vert \Bigg ( wI - x_{1j}(z\hat{x})A_1-\sum_{s=2}^n zx_sA_s\Bigg )^{-1}  \Bigg \Vert \leq \frac{{\mathscr M}}{|z|}.\end{equation} 
The norm here is understood as the norm of an operator acting on $\C^N$.
 
Let $\eta\in \C^N, \ \parallel \eta \parallel =1$, and $\zeta \in \C^N$ satisfies 
\begin{equation}\label{system}
 \left ( wI-x_{1j}(z\hat{x})A_1-\sum_{l=2}^n zx_lA_l \right ) \zeta=\eta, \end{equation}
which is a system of $N$ equations in variables $\zeta_1,...,\zeta_N$, coordinates of $\zeta$ in the basis $e_1,...,e_N$. Since $w-1=t_0 ze^{i\theta}$ and $1-x_{1j}(z\hat{x})=z\left ( \sum_{s=2}^n \frac{\partial x_{1j}}{\partial x_s}|_{(0,...,0)}x_s\right )+O(|z|^2)$, the first $M_1$ equation of this system can be written as
\begin{equation}\label{subsystem}
z\bigg ( {\mathcal M}_j(t_0 e^{i\theta })+ O(z)I_{M_1} ) (^{M_1}\widehat{\zeta})\bigg)=(^{M_1}\widehat{\eta})-z{\mathcal N} \ ( \widehat{\zeta}^{N-M_1}),
\end{equation}
where $I_{M_1}$ is the $M_1\times M_1$ identity matrix, $(^{M_1}\widehat{\zeta})=(\zeta_1,...,\zeta_{M_1})$ and  $(^{M_1}\widehat{\eta})=(\eta_1,...,\eta_{M_1}), \ (\widehat{\zeta}^{N-M_1})=(\zeta_{M_1+1},...,\zeta_N)$, and ${\mathcal N}$ is the $M_1\times (N-M_1)$ part of the matrix $x_2A_2+...+x_nA_n$ consisting of the entrees in the first $M_1$ rows and the last $N-M_1$ columns.

Consider \eqref{subsystem} as a system of equations in $\zeta_1,...,\zeta_{M_1}$. Then \eqref{determinant} implies that for sufficiently small $z$ the main determinant of this system does not vanish (it is equal to $z^{M_1}{\mathcal S}_j(t_0 e^{i\theta})+O(|z|^{M_1+1})$). Now, Cramer's rule implies that $\zeta_1,...,\zeta_{M_1}$ are expressed in terms of $\zeta_{M_1+1}$, ..., $\zeta_N $ and $\eta_1,...,\eta_{M_1}$ in the following way:
\begin{equation}\label{zeta_i} 
\zeta_i=\left [ \frac{1}{z}\sum_{l=1}^{M_1}  \frac{D_{il}(z)}{{\mathcal S}_j(t_0 e^{i\theta})} \eta_l\right ] + \sum_{l=M+1}^N \frac{\psi_{il}(z)}{{\mathcal S}_j(t_0 e^{i\theta})}\zeta_l,\ i=1,...,k_1.
\end{equation}
where $D_{il}(z)$ and $\psi_{il}(z)$ are uniformly bounded analytic functions in a small punctured neighborhood of the origin (and, therefore, analytically extendable to the origin).
Substitute these expressions of $\zeta_i, \ i=1,...,M_1$ in the remaining $N-M_1$ equations of the system \eqref{system}. We obtain a system in the following form:
\begin{eqnarray}
\left [(1+t_0 z e^{i\theta})I_{N-M_1}-x_{1j}(z\hat{x})\Lambda - {\mathcal L}(z)\right ] (\widehat{\zeta}^{N-M_1}) \nonumber \\=(\widehat{\eta}^{N-M_1})-{\mathcal T}(z)(^{M_1}\widehat{\eta}) \label{remaining_system}, 
\end{eqnarray}
where $I_{N-M_1}$ is the $(N-M_1)\times (N-M_1)$ identity matrix, $\Lambda$ is the $(N-M_1)\times (N-M_1)$ diagonal matrix with $\alpha_{M_1+1},...,\alpha_N$ on the main diagonal (they are coming from $A_1$), ${\mathcal L}(z)$ is an $(N-M_1)\times (N-M_1)$ matrix whose all entries are of the order of $z$, and ${\mathcal T}(z)$ is the following $(N-M_1)\times M_1$ matrix with entrees
$$ \left ( {\mathcal T}(z) \right )_{im}=\frac{1}{{\mathcal S}_j(t_0 e^{i\theta})}\sum_{r=2}^nx_r\sum_{l=1}^{M_1} a_{il}^rD_{lm}(z),$$ 
that is the product of the lower left  $(N-M_1)\times M_1$  block of the matrix $\sum_{r=2}^nx_rA_r$ and the matrix $D(z)=\left [\frac{ D_{lm}(z)}{{\mathcal S}_j(t_0 e^{i\theta})} \right ]$.

Since for $z$ close to zero, $x_{1j}(z\hat{x})$ is close to one, and since $\alpha_{M+1},...,\alpha_N$ are not equal to one, it follows that the main determinant of \eqref{remaining_system} considered as a system in $\zeta_{M_1+1},...,\zeta_N$ stays away from 0 as $z\to 0$. Since for all $j=1,...,N$, \ $|\eta_J|\leq 1$, this implies that $\zeta_{M_1+1},...,\zeta_N$ stay bounded as $z\to 0$. In summary: there exists a constant $\tilde{{\mathscr M}}$ such that 
\begin{equation}\label{estimate}|\zeta_j|\leq \frac{\tilde{{\mathscr M}}}{|z|} \ j=1,...,M_1, \  |\zeta_j|\leq \tilde{{\mathscr M}}, \  j=M_1+1,...,N.
\end{equation} 
This yeilds 
\begin{equation}\label{estimate1}
\parallel \zeta \parallel \leq \frac{{\mathscr M}_1}{|z|}
\end{equation} for some constant ${\mathscr M}_1$ independent of $z$ and $\eta$, which proves \eqref{norm_estimate}. 

Now, \eqref{norm_estimate} implies
\begin{align} 
&\Bigg \Vert P_j(y_(z\hat{x})) \Bigg \Vert = \Bigg \Vert \frac{1}{2\pi i}\int_{|w-1|=t_0 |z|} \bigg(wI - x_{1j}(z\hat{x})A_1-\sum_{l=2}^n zx_lA_l\bigg)^{-1}dw  \Bigg \Vert  \nonumber \\
& \label{final estimate} \\
&\leq \frac{1}{2\pi }\left (\max_{	|w-1|=t_0 |z|} \Bigg \Vert \bigg(wI - x_{1j}(z\hat{x})A_1-\sum_{l=2}^n zx_lA_l\bigg)^{-1}\Bigg \Vert  \right ) 2\pi t_0 |z| \leq {\mathscr M}_1t_0. \nonumber
\end{align}

Since, the integral 
$$ \int_{|w-1|=t_0 |z|} \bigg(wI - x_{1j}(z\hat{x})A_1-\sum_{l=2}^n zx_lA_l\bigg)^{-1}dw $$
is an analytic function in $z$ in a small punctured disk centered at the origin, Riemann's theorem implies that it is analytically extendable to $z=0$. This completes the proof of Theorem \ref{limit_projections} for $\lambda \neq  0$.  
\vspace{.2cm}

2. $\lambda =0$.

The details of the proof in this case are very similar to those when $\lambda\neq 0$. The only differences are the following. If $\hat{x}$ is sufficiently close to 0, then close to 0 spectral points of the matrix $A_1+x_2A_2+\cdots +x_nA_n-x_{n+1,j}(\hat{x})I$ are  $x_{n+1,i}(\hat{x})-x_{n+1,j}(\hat{x}), \ i\in I_0$ (here we again write $x_{n+1,j}$ instead $x_{n+_1,0,j}$). Now, condition $\tilde{b})$ implies that there exists $c >0$ such that for sufficiently small $z$
$$|x_{n+1,i}(z\hat{x})-x_{n+1,j}(z\hat{x})|>c |z|, \ i,j\in I_0, \ i\neq j. $$
Further, we choose a basis $e_1,\dots,e_N$ which is an eigenbasis for $A_1$ so that
eigenvectors $e_1,\dots,e_{M_0}$ are 0-eigenvectors, and $e_{M_0+1},\dots,e_N$ are eigenvectors with non-trivial eigenvalues, and write $A_2,\dots,A_n$ in this basis. The matrix ${\mathcal M}_j(t)$ in this case is defined as $M_0\times M_0$ block of the matrix
$$\Bigg (t+ \sum_{l+2}^n \frac{\partial x_{n+1,j}}{\partial x_l} x_l \Bigg )I-\sum_{l=2}^n x_lA_l$$
of the first $M_0$ rows and the first $M_0$ columns. Again, there exists $b>0$ such that the polynomial ${\mathcal S}_j(t)=det({\mathcal M}_j(t))$ does not vanish in the punctured  disk 
of radius $b$ centered at the origin and we pick $0<t_0<b$. The remaining details of the proof are practically identical to those in the case $\lambda\neq 0$ (of course, the integral \eqref{projection} in \eqref{final estimate} is replaced with the integral \eqref{projection for 0}).

The proof is complete.


\vspace{.3cm}

\textbf{Remark 1}. We would like to note that the above proof holds when $A_1$ is just a diagonal matrix, not necessarily normal. 

\vspace{.2cm}

\textbf{Remark 2}. In general, limit projections depend on the choice of $\hat{x}$.

\vspace{.2cm} 

\textbf{Remark 3}. It is easily seen that a similar proof holds when a point approaches $(1,0,...,0)$ not along a line, but along a smooth curve $\gamma(z)\in \C^{n-1}$ which is not tangent to any component of $\hat{{\mathcal U}}$ at the origin.
\vspace{.3cm}

\section{\textbf{Relations between component projections ${\mathcal P}_{j,\lambda}$. \\ Proofs of Theorems \ref{first two moments} and \ref{A^2}}\label{relations1}}

\vspace{.2cm}

Each projection ${\mathscr P}_\lambda$ in the spectral resolution \eqref{A_1 decomposition} is, of course, represented by the integral
$${\mathscr P}_\lambda= \frac{1}{2\pi i}\int_{\gamma_\lambda} (w-A_1)^{-1}dw, $$
where $\gamma_\lambda$ is a contour which separates $\lambda$ from the rest of the spectrum of $A_1$.

\vspace{.2cm}

\begin{proposition}\label{independent_projections}
The limit projections $\Po_{j,\lambda}$ from Theorem \ref{limit_projections} satisfy the following relations:
\begin{eqnarray}
&{\mathcal P}_{i,\lambda}{\mathcal P}_{j,\lambda}=0 \ \mbox{if} \ i\neq j, \ i,j\in I_\lambda. \label{product_projections} \\
&\sum_{j\in I_\lambda} {\mathcal P}_{j,\lambda}={\mathscr P}_\lambda \label{sum_of_projections}
\end{eqnarray}
\end{proposition}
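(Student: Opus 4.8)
The plan is to work in the eigenbasis of $A_1$ introduced in the proof of Theorem \ref{limit_projections}, where $A_1$ is diagonal with its first $M_\lambda$ diagonal entries equal to $\lambda$ (for $\lambda\neq 0$; the $\lambda=0$ case is identical after passing to the chart $\{x_1\neq 0\}$). The key structural fact to extract first is: for $j\in I_\lambda$ and $\hat{x}$ as in Theorem \ref{limit_projections}, the limit projection $\Po_{j,\lambda}(\hat{x}) = \lim_{t\to 0}P_{j,\lambda}(t\hat{x})$ has range contained in the span of $e_1,\dots,e_{M_\lambda}$ and vanishes on the span of $e_{M_\lambda+1},\dots,e_N$, i.e. $\Po_{j,\lambda} = {\mathscr P}_\lambda \Po_{j,\lambda} = \Po_{j,\lambda}{\mathscr P}_\lambda$. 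This follows from the estimate \eqref{estimate}: the "small" coordinates $\zeta_1,\dots,\zeta_{M_\lambda}$ of the resolvent applied to a unit vector blow up like $1/|z|$, while the coordinates $\zeta_{M_\lambda+1},\dots,\zeta_N$ stay bounded, and after multiplying by the contour length $2\pi t_0|z|$ only the first block survives in the limit, while the off-diagonal blocks connecting the two groups also contribute terms that vanish with $z$. So in the limit $\Po_{j,\lambda}$ is supported on the $M_\lambda$-dimensional $\lambda$-eigenspace of $A_1$.

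Next I would prove \eqref{product_projections}. For a fixed regular point $y_j(z\hat{x})$ on the $j$-th component (with $z\neq 0$ small), the eigenvalues $\mu_{ij}(z)$, $i\in I_\lambda$, of $A(y_j(z\hat{x}))$ are all distinct by \eqref{spectral_distance1} and its analogue, and the projection $P_{j,\lambda}(z\hat{x})$ is the Riesz projection onto the eigenspace for the eigenvalue $1$ (i.e. $\mu_{jj}=1$); the other $\mu_{ij}(z)$, $i\neq j$, lie outside the contour $|w-1|=\delta_j(z)$. Hence for $i\neq j$ the projection $P_{i,\lambda}$ is being evaluated at a \emph{different} point $y_i(z\hat{x})$, so there is no direct orthogonality at finite $z$. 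The honest route is: the rank of $P_{j,\lambda}(z\hat{x})$ is $m_j$ (the multiplicity of the component, using $m_j=1$ is not needed here), so $\sum_{j\in I_\lambda}\mathrm{rank}\,P_{j,\lambda}(z\hat{x}) = \sum_{j\in I_\lambda}m_j = M_\lambda$ by the first bullet after the $\mathcal{O}$ discussion. Passing to the limit, each $\Po_{j,\lambda}$ has rank $\leq m_j$ (rank is lower semicontinuous, but here we also get $\geq m_j$ since the rank of a projection that converges in norm is eventually constant), and all $\Po_{j,\lambda}$ have range inside the $M_\lambda$-dimensional space $\mathrm{ran}\,{\mathscr P}_\lambda$. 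To get \eqref{product_projections} and \eqref{sum_of_projections} simultaneously it then suffices to show the ranges are \emph{linearly independent} and together span $\mathrm{ran}\,{\mathscr P}_\lambda$, plus that each $\Po_{j,\lambda}$ is the spectral projection making the decomposition direct. I expect the cleanest way is to identify $\Po_{j,\lambda}$ with a Riesz projection of a single limiting $M_\lambda\times M_\lambda$ matrix pencil: rescale, so that on $\mathrm{ran}\,{\mathscr P}_\lambda$ the operator $\tfrac{1}{z}\big(I - x_{1,\lambda,j}(z\hat x)A_1 - \sum_{l\geq 2}zx_lA_l\big)$ restricted to the top block converges (after dividing by $z$ and using $1 - x_{1,\lambda,j}(z\hat x)= z\sum_l \partial_l x_{1,\lambda,j}(0)x_l + O(z^2)$) to the fixed matrix $\mathcal{M}(0)$-type object $\big(\sum_l \partial_l x_{1,\lambda,j}(0)x_l\big)\,\mathrm{id} - \sum_{l\geq 2}x_l(A_l)|_{\mathrm{block}}$ — wait, this depends on $j$ only through the linear functional $\sum_l \partial_l x_{1,\lambda,j}(0)x_l$; the point is that all $j\in I_\lambda$ see the \emph{same} top-block matrix $B:=\sum_{l\geq 2}x_l\,(\text{top block of }A_l)$, and $\Po_{j,\lambda}$ is the spectral projection of $B$ (acting on the $M_\lambda$-dimensional space) onto the eigenvalue $\beta_j := \sum_{l\geq 2}\partial_l x_{1,\lambda,j}(0)\,x_l$, and by condition b) these $\beta_j$ are pairwise distinct (this is exactly \eqref{tangent}). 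Since $B$ is an $M_\lambda\times M_\lambda$ matrix whose distinct eigenvalues are exactly $\{\beta_j : j\in I_\lambda\}$ with algebraic multiplicities $m_j$, summing to $M_\lambda$, the Riesz projections $\Po_{j,\lambda}$ are mutually annihilating idempotents summing to the identity on $\mathrm{ran}\,{\mathscr P}_\lambda$, which is precisely ${\mathscr P}_\lambda$ viewed as an operator on $\C^N$ — giving both \eqref{product_projections} and \eqref{sum_of_projections}.

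The main obstacle, and the step to write carefully, is the justification that $\Po_{j,\lambda}$ really equals the Riesz/spectral projection of the limiting top-block pencil onto $\beta_j$, i.e. that one can interchange the limit $t\to0$ with the contour integral and with the block reduction. Concretely: the contour in \eqref{projection} has radius $\delta_j(z)\sim |z|$, so under the substitution $w = 1 + z\,\zeta$ (or the appropriate rescaling) the integral \eqref{projection} becomes, on the top block, a contour integral over a \emph{fixed} circle of $\big(\zeta\,\mathrm{id} - (\text{top-block operator depending on }z)\big)^{-1}$, and one must show the integrand converges uniformly on that fixed circle to $\big(\zeta\,\mathrm{id} - (B - \beta_j\,\mathrm{id})/(\text{scaling})\big)^{-1}$ — which is exactly the content of the Cramer's-rule expansion \eqref{zeta_i}–\eqref{remaining_system} already carried out in Section \ref{component projections}, now tracked to leading order rather than merely bounded. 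The off-block contributions ($\zeta_{M_\lambda+1},\dots,\zeta_N$ staying bounded while the contour length is $O(|z|)$) vanish in the limit, confirming $\Po_{j,\lambda}={\mathscr P}_\lambda\Po_{j,\lambda}{\mathscr P}_\lambda$. Once the leading-order limit of the resolvent on the top block is pinned down, the identification with the spectral projection of $B$ onto $\beta_j$ and hence \eqref{product_projections}, \eqref{sum_of_projections} are immediate from the finite-dimensional spectral theorem for the single matrix $B$.
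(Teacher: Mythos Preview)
Your approach is correct but genuinely different from the paper's, and it is worth contrasting the two.

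The paper never computes $\Po_{j,\lambda}$ explicitly. For \eqref{product_projections} it exploits the homogeneity of the proper joint spectrum: scaling the point $y_{j,\lambda}(\hat{x})$ by the root $\tau_{i,j,\lambda}(\hat{x})$ of $\tilde R_i(\tau\,y_{j,\lambda}(\hat{x}))=0$ lands on the $i$-th component, and since $A(\tau y)=\tau A(y)$, the projection $P_{i,\lambda}$ at that scaled point and $P_{j,\lambda}$ at the original point are Riesz projections of the \emph{same} matrix $A(y_{j,\lambda}(\hat{x}))$ onto the distinct eigenvalues $1/\tau_{i,j,\lambda}$ and $1$, hence annihilate each other; passing to the limit along the curve $t\mapsto \tau_{i,j,\lambda}(t\hat{x})\,t\hat{x}$ (using Theorem~\ref{limit_projections} and Remark~3) gives \eqref{product_projections}. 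For \eqref{sum_of_projections} the paper integrates over a \emph{fixed} contour $\gamma_\lambda/\lambda$ separating $1$ from the rest of $\sigma(A_1/\lambda)$: the integral converges to ${\mathscr P}_\lambda$ by continuity, while the residue theorem identifies it with $\sum_{i\in I_\lambda}P_{i,\lambda}(\tau_{i,j,\lambda}\hat{x})\to\sum_i\Po_{i,\lambda}$.

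Your route instead identifies each $\Po_{j,\lambda}$ as the Riesz projection of the compressed matrix $B_{11}={\mathscr P}_\lambda\big(\sum_{l\ge 2}x_lA_l\big){\mathscr P}_\lambda$ onto the eigenvalue $-\sum_l\partial_l x_{1,\lambda,j}(0)\,x_l$, after which \eqref{product_projections} and \eqref{sum_of_projections} are immediate from the spectral resolution of $B_{11}$. This is sound: the substitution $w=1+z\zeta$ and the block Schur analysis you sketch do yield uniform convergence of $z\cdot(\text{resolvent})$ on $|\zeta|=t_0$ to $\mathrm{diag}\big(((\zeta-\gamma_j)I-B_{11})^{-1},\,0\big)$, and the paper's choice of $t_0<b$ guarantees the limiting contour encloses only the pole at $\zeta=0$ (indeed $\mathcal S_j(t)=t\prod_{i\ne j}(t-(\gamma_j-\gamma_i))$ in your notation). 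The payoff is an explicit description of $\Po_{j,\lambda}(\hat{x})$ that makes its $\hat{x}$-dependence transparent. The cost is that you must upgrade the bound \eqref{estimate} from $O(1/|z|)$ to an actual leading-order asymptotic, whereas the paper's argument uses Theorem~\ref{limit_projections} purely as a black box and never reopens the resolvent estimates. One minor point: your sign convention for $\beta_j$ drifts (the eigenvalue of $B_{11}$ is $-\gamma_j$, not $+\gamma_j$), so track this carefully when you write it out.
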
 

\begin{proof}
The proofs for $\lambda\neq 0$ and $\lambda=0$ are very similar, so we will give it for $\lambda\neq 0$. 

Fix $\lambda\in \sigma(A_1), \ \lambda\neq 0$ and $j\in I_\lambda$. Let $\hat{x}\in {\mathcal O}$ - the neighborhood from the proof of Theorem \ref{limit_projections}. Equation \eqref{spectral_distance1} gives an estimate for $\mu_{ij}(\hat{x}), \ i\in I_\lambda$, the eigenvalues close to 1 of the operator $A(y_{j,\lambda}(\hat{x}))=x_{1,\lambda,j}(\hat{x})A_1+\cdots +x_nA_n$.  Recall that $\mu_{ij}(\hat{x})$ are the reciprocals of the roots of equation \eqref{root equation} which in the proof of Theorem \ref{limit_projections} were denoted by $\tau_{i,j}(1)$. In the proof of Theorem \ref{limit_projections} we supresed the dependance of these roots on $\lambda$, assuming that $\lambda=1$, but here it is appropriate to call them $\tau_{i,j,\lambda}(z)$. Recall that   $y_{j,\lambda}(\hat{x})$ was defined by \eqref{point on j component}.  Write
$$\tau_{i,j,\lambda}(\hat{x})= \tau_{i,j}(1), \ \tilde{y}_{i,j,\lambda}(\hat{x})=\tau_{i,j,\lambda}(\hat{x})y_{j,\lambda}(\hat{x}). $$

Of course, 
$$\tilde{y}_{j,j,\lambda}(\hat{x})=y_{j,\lambda}(\hat{x})$$. 
It was mentioned above that since $\tilde{R}_i$ is an irreducible polynomial, every regular point of $\{\tilde{R}_i=0\}$ has multiplicity 1, so $\tau_{i,j,\lambda}(\hat{x})$ is a bounded analytic function of $\hat{x}$ in a punctured neighborhood of the origin, and, therefore, can be extended to the origin to become an analytic function in the whole neighborhood satisfying 
$$ \tau_{i,j,\lambda}(0)=1.$$

It is also easy to compute that for $m=2,\dots,n$
\begin{equation}\label{tau_derivative}
\frac{\partial \tau_{i,j,\lambda}}{\partial x_m}(\hat{x})=\frac{ \tau_{i,j,\lambda}(\hat{x})\big(\frac{\partial x_{1,\lambda,i}}{\partial x_m}(\hat{x})-\frac{\partial x_{1,\lambda,j}}{\partial x_m}(\hat{x})\big)}{x_{1,\lambda,j}(\hat{x})-x_m\frac{\partial x_{1,\lambda,i}}{\partial x_m}(\hat{x})},	
\end{equation}
and, therefore,  \eqref{tau_derivative}implies
\begin{equation}\label{tau_derivative_at_0}
\frac{\partial \tau_{j,\lambda,i}}{\partial x_m}(0)=\frac{\frac{\partial x_{1i}}{\partial x_m}(0)-\frac{\partial x_{1j}}{\partial x_m}(0)}{\lambda}	.
\end{equation}

Further, the projections $P_{i,\lambda}(\tilde{y}_{j,j,\lambda}(\hat{x}))$ and  $ P_{j,\lambda}(\tilde{y}_{i,j,\lambda}(\hat{x}))$ (defined by the integral  \eqref{projection} ) satisfy
$$P_{i,\lambda}(\tilde{y}_{i,j,\lambda}(\hat{x})) P_{j,\lambda}(\tilde{y}_{j,j,\lambda}(\hat{x}))=0, \ i\neq j. $$

Since $\tau_{i,j,\lambda}(\hat{x})\hat{x} \to 0$ as $\hat{x}\to 0$,  Theorem \ref{limit_projections} implies   $P_{j,\lambda}(\tilde{y}_{j,j,\lambda}(\hat{x}))\to {\mathcal P}_{j,\lambda}$  and  $ P_{i,\lambda}(\tilde{y}_{i,j,\lambda}(\hat{x}))\to {\mathcal P}_{i,\lambda}$ as $\hat{x}\to 0$. This proves \eqref{product_projections}.

To prove \eqref{sum_of_projections} we fix a contour $\gamma_\lambda$ that separates $\lambda$ from the rest of the spectrum of $A_1$. Suppose that $x$ is close to $(1/\lambda,0,\dots,0)$. Consider the matrix
$$A(x)=x_1A_1+\cdots + x_nA_n.$$

Then $1\in \sigma(A_1/\lambda)$ and ${\mathscr P}_\lambda$ is the projection on the 1-eigenspace of $A_1/\lambda$. Also, $\gamma_\lambda/\lambda$ separates 1 from the rest of the spectrum of $A_1/\lambda$.

Of course, $A(x)\to A_1/\lambda$ as $\hat{x}\to 0, \ \mbox{and} \ x_1\to 1/\lambda$. If $|x_1-1/\lambda|+|x_2|+\cdots+|x_n|$ is small enough, there are no spectral points of $A_1/\lambda$ on $\gamma_\lambda/\lambda$, and, therefore,
$$\frac{1}{2\pi i}\int_{\gamma_\lambda/\lambda} (w-A(x))^{-1}dw \to \frac{1}{2\pi i }\int_{\gamma_\lambda /\lambda} \left (w-\frac{1}{\lambda}A_1\right )^{-1}dw={\mathscr P}_\lambda, $$  
as ${x}\to (1/\lambda,0,\dots,0).$

If $x_1=x_{1,\lambda,j}(\hat{x})$, then the poles of $(w-A(x))^{-1}$ which lie inside $\gamma_\lambda/\lambda$ are the reciprocals of $\tau_{i,j,\lambda}(\hat{x}), \ i \in I_\lambda$. Now, by the residue theorem
$$\frac{1}{2\pi i}\int_{\gamma_\lambda} (w-A(x))^{-1}dw=\sum_{i\in I_\lambda} P_{i,\lambda}( \tilde{y}_{i,j,\lambda}(\hat{x}))\to \sum_{i\i I_\lambda} {\mathcal P}_{i,\lambda},$$
and we are done.

\end{proof}

\vspace{.3cm}

We now concentrate on the case when $n=2$. Of course, in this case $\hat{{\mathcal U}}=\{0\}$, and the limit projections are the same for all $x_2$ (which is $\hat{x}$ in this situation).
\begin{proposition}\label{relations_between_projections}
Let $n=2, \ \lambda\in\sigma(A_1)$. Assume that conditions a) and b), or $\tilde{a})$ and $\tilde{b})$ are satisfied at $\lambda$ when $\lambda \neq 0$ or $\lambda=0$ respectively. For $i,j\in I_\lambda$ we have	
\begin{eqnarray}
&{\mathcal P}_{j,\lambda}A_2{\mathcal P}_{i,\lambda}=0, \ i\neq j \label{moment1_different}
\end{eqnarray}	
If there exists a neighborhood ${\mathcal O}$ of the origin, such that at every regular point of $\{\tilde{R}_j=0\}\cap {\mathcal O}$, the range of $P_{j,\lambda}(x_2)$ consists of 1-eigenvectors for $A(y_{j,\lambda}((x_2))=x_{1,\lambda,j}(x_2)A_1+x_2A_2$, if $\lambda \neq 0$, and , 0-eigenvectors of $A(y_{j,0}((x_2))=A_1+x_2A_2-x_{3,0,j}(x_2)I$ when $\lambda=0$, or equivalently,
\small\begin{eqnarray}  
&\Big(A(y_{j,\lambda}(x_2))-I\Big)P_{j,\lambda}(x_2)=P_{j,\lambda}(x_2)\Big(A(y_{j,\lambda}(x_2))-I\Big)=0, \ \lambda\neq 0, \label{alleigenvectors1}\\
&A(y_{j,0}((x_2))P_{j,0}(x_2)=P_{j,0}(x_2)A(y_{j,0}(x_2))=0	, \ \lambda=0 , \label{alleigenvectors2}
\end{eqnarray}\normalsize
 
then 
\small \begin{eqnarray}
&{\mathcal P}_{j,\lambda}^\prime \bigg(x_{1,\lambda,j}^\prime	(0)A_1+A_2 \bigg ){\mathcal P}_{j,\lambda}={\mathcal P}_{j,\lambda}\bigg ( x_{1,\lambda,j}^\prime	(0)A_1+A_2 \bigg ){\mathcal P}_{j,\lambda}^\prime \nonumber \\
&=-\frac{x_{1,\lambda,j}^{\prime \prime}(0)}{2}{\mathcal P}_{j,\lambda},  \ \lambda \neq 0, \label{relation1} \\
&\Po_{j,0}^\prime \Big(A_2-x_{3,,0,j}^\prime(0)I\Big)\Po_{j,0}=\Po_{j,0}\Big(A_2-x_{3,0,j}^\prime(0)I\Big) \nonumber \\
&= \frac{x_{3,0,j}^{\prime \prime}(0)}{2}\Po_{j,0}, \ \lambda=0 \label{relation2}\\
 &{\mathcal P}_{j,\lambda}^\prime \bigg ( x_{1,\lambda,j}^\prime(0)A_1+A_2\bigg ){\mathcal P}_{i,\lambda}={\mathcal P}_{j,\lambda}\bigg ( x_{1,\lambda,j}^\prime(0)A_1+A_2\bigg ){\mathcal P}_{i,\lambda}^\prime \nonumber \\
 &=0,  \ i\neq j, \ \lambda \neq 0,	 \label{relation3}\\
 &\Po_{j,0}^\prime \Big(A_2-x_{3,0,j}^\prime(0)I\Big)\Po_{i,0}=\Po_{j,0}\Big(A_2-x_{3,0,j}^\prime(0)I\Big)\Po_{i,0}^\prime \nonumber \\
 & =0, \ i\neq j, \ \lambda=0. \label{relation4}
\end{eqnarray}\normalsize

where ${\mathcal P}_{j,\lambda}^\prime=\lim_{x_2 \to 0} \frac{dP_{j,\lambda}(x_2)}{dx_2}, \ j\in I_\lambda.$

\end{proposition}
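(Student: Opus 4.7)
\smallskip

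\noindent \textbf{Proof plan.}

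The first step is to establish the auxiliary commutations $A_1\Po_{j,\lambda}=\lambda\Po_{j,\lambda}=\Po_{j,\lambda}A_1$, together with $A_1\Po_{j,0}=0=\Po_{j,0}A_1$ in the $\lambda=0$ case. These follow from Proposition~\ref{independent_projections}: the relation $\sum_{j\in I_\lambda}\Po_{j,\lambda}={\mathscr P}_\lambda$ forces $\mathrm{range}\,\Po_{j,\lambda}\subset\ker(A_1-\lambda I)$, while the orthogonalities $\Po_{i,\lambda}\Po_{j,\lambda}=0$ for $i\neq j$ combined with the spectral decomposition \eqref{A_1 decomposition} force $\ker\Po_{j,\lambda}\supset\bigoplus_{\mu\neq\lambda}\ker(A_1-\mu I)$, so $A_1$ acts as the scalar $\lambda$ from either side of $\Po_{j,\lambda}$.

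To prove \eqref{moment1_different} I exploit that, being the Riesz integral \eqref{projection}, $P_{j,\lambda}(x_2)$ commutes with the pencil $A(y_{j,\lambda}(x_2))=x_{1,\lambda,j}(x_2)A_1+x_2A_2$ identically in $x_2$. Differentiating this commutation once at $x_2=0$ and invoking the basic commutations above to cancel the $A_1$-contributions on each side against each other yields
\[
[A_2,\Po_{j,\lambda}]=\lambda^{-1}\bigl[\Po'_{j,\lambda},A_1\bigr].
\]
Left-multiplying by $\Po_{j,\lambda}$ and right-multiplying by $\Po_{i,\lambda}$ ($i\neq j$) annihilates the right-hand side (both $\Po_{j,\lambda}A_1$ and $A_1\Po_{i,\lambda}$ reduce to $\lambda$ times the respective projection) and simultaneously collapses the left-hand side to $-\Po_{j,\lambda}A_2\Po_{i,\lambda}$ via $\Po_{j,\lambda}\Po_{i,\lambda}=0$. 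The $\lambda=0$ case is identical after replacing the pencil by $A_1+x_2A_2-x_{3,0,j}(x_2)I$.

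For the remaining relations \eqref{relation1}--\eqref{relation4} I invoke the eigenvector hypothesis $(A(y_{j,\lambda}(x_2))-I)P_{j,\lambda}(x_2)=0=P_{j,\lambda}(x_2)(A(y_{j,\lambda}(x_2))-I)$ (or its $\lambda=0$ analog) and differentiate the one-sided version twice in $x_2$ at $x_2=0$. The resulting equation has three terms: an $A''$-term $x''_{1,\lambda,j}(0)A_1\Po_{j,\lambda}$, the middle first-derivative term $2(x'_{1,\lambda,j}(0)A_1+A_2)\Po'_{j,\lambda}$, and a second-derivative term of the form $((1/\lambda)A_1-I)P''_{j,\lambda}(0)$. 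For the diagonal relations \eqref{relation1} and \eqref{relation2}, multiplication by $\Po_{j,\lambda}$ from the appropriate side kills the $P''$-term via $\Po_{j,\lambda}((1/\lambda)A_1-I)=0$, converts the $A''$-term into a scalar multiple of $\Po_{j,\lambda}$ via $\Po_{j,\lambda}A_1\Po_{j,\lambda}=\lambda\Po_{j,\lambda}$, and isolates the middle term as the stated identity. For the off-diagonal relations \eqref{relation3} and \eqref{relation4}, the same twice-differentiated identity is multiplied by $\Po_{i,\lambda}$ on the opposite side; now the $P''$-term drops out via $((1/\lambda)A_1-I)\Po_{i,\lambda}=0$, and the $A''$-term drops out via $\Po_{j,\lambda}A_1\Po_{i,\lambda}=\lambda\Po_{j,\lambda}\Po_{i,\lambda}=0$, leaving the claimed vanishing. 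The main bookkeeping obstacle is matching each of the four equalities in \eqref{relation1}--\eqref{relation4} to the correct one-sided eigenvector identity (left versus right, and whether differentiation is taken on the $j$-th or $i$-th component), so that the annihilating factor always lands on the side where it is needed.
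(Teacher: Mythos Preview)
Your proposal is correct and follows essentially the same route as the paper: differentiate the Riesz commutation $P_{j,\lambda}(x_2)A(y_{j,\lambda}(x_2))=A(y_{j,\lambda}(x_2))P_{j,\lambda}(x_2)$ once and sandwich between limit projections to obtain \eqref{moment1_different}, then differentiate the one-sided eigenvector identities \eqref{alleigenvectors1}--\eqref{alleigenvectors2} twice and multiply by $\Po_{j,\lambda}$ or $\Po_{i,\lambda}$ on the appropriate side to obtain \eqref{relation1}--\eqref{relation4}. Your commutator bookkeeping for \eqref{moment1_different} (sandwich $[A_2,\Po_{j,\lambda}]=\lambda^{-1}[\Po'_{j,\lambda},A_1]$ directly between $\Po_{j,\lambda}$ and $\Po_{i,\lambda}$) is slightly cleaner than the paper's two-step multiplication, and your explicit derivation of $A_1\Po_{j,\lambda}=\lambda\Po_{j,\lambda}$ from Proposition~\ref{independent_projections} fills in a step the paper simply asserts, but the substance is identical.
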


\begin{proof}
\vspace{.2cm}

\noindent I.  $\lambda\neq 0$.

We have for every $x_2$ close to 0:
\begin{equation}\label{identity} 
P_{j, \lambda}(x_2)A(x_{1,\lambda,j}(x_2),x_2)=A(x_{1,\lambda,j}(x_2),x_2)P_{j,\lambda}(x_2).
\end{equation}	
By Theorem \ref{limit_projections} both sides of this equality are analytic functions of $x_2$ in a neighborhood of the origin. Differentiate with respect to $x_2$:
\begin{align*}
&\frac{dP_{j,\lambda}(x_2)}{dx_2}A(x_{1,\lambda,j}(x_2),x_2)+P_{j,\lambda}(x_2)(x_{1,\lambda,j}^\prime (x_2)A_1+A_2) \\
&=A(x_{1,\lambda,j}(x_2),x_2) \frac{dP_{j,\lambda}(x_2)}{dx_2}+(x_{1,\lambda,j}^\prime (x_2)A_1+A_2)P_{j,\lambda}(x_2). 	
\end{align*}
Passing to the limit as $x_2 \to 0$ and using   ${\mathcal P}_{j,\lambda}A_1=A_1{\mathcal P}_{j,\lambda}=\lambda{\mathcal P}_{j,\lambda}$, we obtain
$$\frac{1}{\lambda}{\mathcal P}_{j,\lambda}^\prime A_1+\lambda x_{1,\lambda,j}^\prime(0){\mathcal P}_{j,\lambda}+{\mathcal P}_{j,\lambda}A_2=\frac{1}{\lambda}A_1{\mathcal P}_{j,\lambda}^\prime +\lambda x_{1,\lambda,j}^\prime(0){\mathcal P}_{j,\lambda}+A_2{\mathcal P}_{j,\lambda},$$
%
so that

$$\frac{1}{\lambda}{\mathcal P}_{j,\lambda}^\prime A_1+{\mathcal P}_{j,\lambda}A_2=\frac{1}{\lambda}A_1{\mathcal P}_{j,\lambda}^\prime +A_2{\mathcal P}_{j,\lambda}. $$
Multiply the last equality from the left by ${\mathcal P}_{i,\lambda}$ with $i\neq j, \ i\in I_\lambda$. Since ${\mathcal P}_{i,\lambda}A_1=A_1{\mathcal P}_{i,\lambda}=\lambda{\mathcal P}_{i,\lambda}$, we obtain

$$\frac{1}{\lambda} {\mathcal P}_{i,\lambda}{\mathcal P}_{j,\lambda}^\prime A_1={\mathcal P}_{i,\lambda}{\mathcal P}_{j,\lambda}^\prime+{\mathcal P}_{i,\lambda}A_2{\mathcal P}_{j,\lambda}$$
yeilding
$${\mathcal P}_{i,\lambda}{\mathcal P}_{j,\lambda}^\prime \Bigg(\frac{1}{\lambda}A_1-I\Bigg)= {\mathcal P}_{i,\lambda}A_2{\mathcal P}_{j,\lambda}.$$
Multiply the last equality by ${\mathcal P}_{j,\lambda}$ from the right. Since $$\Bigg(\frac{1}{\lambda}A_1-I\Bigg){\mathcal P}_{j,\lambda}=0$$, and ${\mathcal P}_{j,\lambda}^2={\mathcal P}_{j,\lambda}$, we conclude that
$$ {\mathcal P}_{i,\lambda}A_2{\mathcal P}_{j,\lambda}=0,$$ 
and \eqref{moment1_different} is proved for $\lambda\neq 0$..

If \eqref{alleigenvectors1} holds, then
\begin{eqnarray}
&P_{j,\lambda}(x_2)A(x_{1,\lambda,j}(x_2),x_2)=P_{j,\lambda}(x_2) \label{next1}\\
& A(x_{1,\lambda,j}(x_2),x_2)P_{j,\lambda}(x_2)= P_{j,\lambda}(x_2).	\label{next2}
\end{eqnarray}

Differentiating twice \eqref{next1} and \eqref{next2} with respect to $x_2$  and passing to the limit as $x_2 \to 0$ yields



\begin{eqnarray*}
&\frac{1}{\lambda}{\mathcal P}_{j,\lambda} ^{\prime \prime} A_1+2{\mathcal P}_{j,\lambda}^\prime(x_{1,\lambda,j}^\prime (0)A_1+A_2)+x_{1,\lambda,j}^{\prime \prime}(0){\mathcal P}_{j,\lambda}A_1={\mathcal P}_{j,\lambda}^{\prime \prime} , \\
&x_{1,\lambda,j}^{\prime \prime}(0)A_1{\mathcal P}_{j,\lambda}+2(x_{1,\lambda,j}^\prime(0)A_1+A_2){\mathcal P}_{j,\lambda}^\prime+\frac{1}{\lambda}A_1{\mathcal P}_j^{\prime \prime}={\mathcal P}_{j,\lambda}^{\prime \prime},
	\end{eqnarray*}
where ${\mathcal P}^{\prime \prime}_{j,\lambda}=\lim_{x_2\to 0} \frac{d^2 P_{j,\lambda}(x_2)}{dx_2^2}$. Moving the righthand side of the last two equations to the left gives

\small\begin{eqnarray}
&{\mathcal P}_{j,\lambda} ^{\prime \prime}( \frac{1}{\lambda}A_1-I)+2{\mathcal P}_{j,\lambda}^\prime(x_{1,\lambda,j}^\prime (0)A_1+A_2)+x_{1,\lambda,j}^{\prime \prime}(0){\mathcal P}_{j,\lambda}A_1=0, \label{next3}\\
&x_{1,\lambda,j}^{\prime \prime}(0)A_1{\mathcal P}_{j,\lambda}+2(x_{1,\lambda,jj}^\prime(0)A_1+A_2){\mathcal P}_{j,\lambda}^\prime+(\frac{1}{\lambda}A_1-I){\mathcal P}_{j,\lambda}^{\prime \prime}=0.	\label{next4}
\end{eqnarray}\normalsize

Multiply \eqref{next3} from the right by ${\mathcal P}_{j,\lambda}$ and  by ${\mathcal P}_{i,\lambda}, \ i\neq j $.
 Again, since 
 \begin{eqnarray*}
 &\Bigg(\frac{1}{\lambda}A_1-I\Bigg)\Po_{l,\lambda}=\Po_{l,\lambda}\Bigg(\frac{1}{\lambda}A_1-I\Bigg)=0, \ l\in I_\lambda, \\
 &	\Po_{l,\lambda}A_1=A_1\Po_{l,\lambda}=\lambda \Po_{l,\lambda}, \ l\in I_\lambda \\
 & \Po_{i,\lambda}\Po_{j,\lambda}=\Po_{j,\lambda}\Po_{i,\lambda}=0, \ i,j\in I_\lambda, \ i\neq j,
 \end{eqnarray*}   
 we obtain

\begin{eqnarray*}
&{\mathcal P}_{j,\lambda}^\prime \Big(x_{1,\lambda,j}^\prime	(0)A_1+A_2\Big){\mathcal P}_{j,\lambda}=-\frac{x_{1,\lambda,j}^{\prime \prime}(0)}{2}{\mathcal P}_{j,\lambda} \\
& {\mathcal P}_{j,\lambda}^\prime \Big(x_{1,\lambda,j}^\prime(0)A_1+A_2\Big){\mathcal P}_{i,\lambda}=0, \ i,j\in I_\lambda, \ i\neq j,
\end{eqnarray*}
Similarly, multiplication of \eqref{next4} from the left by $\Po_{j,\lambda}$ and by $\Po_{i,\lambda}$ results in

\begin{eqnarray*}
& {\mathcal P}_{j,\lambda} \Big(x_{1,\lambda,j}^\prime	(0)A_1+A_2\Big){\mathcal P}_{j,\lambda}^\prime=-\frac{x_{1,\lambda,j}^{\prime \prime}(0)}{2}{\mathcal P}_{j,\lambda} \\
& {\mathcal P}_{i,\lambda} \Big(x_{1,\lambda,j}^\prime(0)A_1+A_2\Big){\mathcal P}_{j,\lambda}^\prime=0, \ i,j\in I_\lambda, \ i\neq j,
\end{eqnarray*}
which finishes the proof for $\lambda \neq 0$.

\noindent II $\lambda=0$. 

The proof in this case is similar and goes along the following lines.
\begin{eqnarray*}
&\Big ( A_1+x_2A_2-x_{3,o,j}(x_2)I\Big )P_{j,0}(x_2)= P_{j,0}(x_2)\Big ( A_1+x_2A_2-x_{3,0,j}(x_2)I\Big ),\\
&\Big( A_2-x_{3,0,j}^\prime (0)I\Big)\Po_{j,0}+A_1\Po_{j,0}^\prime=\Po_{j,0}^\prime A_1+\Po_{j,0}\Big(A_2-x_{3,0,j}^\prime(0)I\Big).
\end{eqnarray*} 
Multiply the last relation from the right by $\Po_{i,0}$ and use 
$$\Po_{j,0}\Po_{i,0}=\Po_{i,0}\Po_{j,0}=0, \ i\neq j, \ A_1\Po_{l,0}=\Po_{l,0}A_1=0, \ l\in I_0.$$ 
Since 
$$P_{j,0 }(x_2)P_{i,0}(x_2)\equiv 0, \ i\neq j, \ \Po_{j,0}^\prime \Po_{i,0}=-\Po_{j,0}\Po_{i,0}^\prime,$$ 
 we obtain
$$ 0=-A_1\Po_{j,0}\Po_{i,0}^\prime=A_1\Po_{j,0}^\prime \Po_{i,0}=\Po_{j,0}A_2\Po_{i,0}, $$
which is \eqref{moment1_different} for $\lambda=0$.

Finally, under the assumption of \eqref{alleigenvectors2}, the proof of \eqref{relation2} and \eqref{relation4} is obtained the same way as the one of \eqref{relation1} and \eqref{relation3} by twice differentiating 
\begin{eqnarray*}
&P_{j,0}(x_2)\Big ( A_1+x_2A_2\Big)=x_{3,,0,j}(x_2)P_{j,0}(x_2) \\
&\Big ( A_1+x_2A_2\Big)P_{j,0}(x_2)=x_{3,0,j}(x_2)P_{j,0}(x_2)	
\end{eqnarray*}
with respect to $x_2$, passing to the limit as $x_2\to 0$, and multiplying by $\Po_{j,0}$ and $\Po_{i,0}$.
 We are done.
\end{proof}
Obviously, \eqref{alleigenvectors1} and \eqref{alleigenvectors2} hold when every spectral component passing through $(1/\lambda,0)$ has multiplicity 1.


\vspace{.4cm}

\Large \textit{Proof of Theorem \ref{first two moments}}\normalsize

\vspace{.3cm}

I. $\lambda \neq 0$

Fix $j\in I_\lambda$. 
In the results of the previous section we were interested only in the roots of \eqref{root equation}
\begin{equation}\label{roots1} 
 \tilde{R}_i(\tau x_{1,\lambda,j}(x_2), \tau x_2)=0. 
 \end{equation}
which were close to 1. Now we will consider all the roots of this equation.  Observe that if $x_2$ is small enough, all these roots are close to the ratios $\lambda/\mu$ where $\mu\in \sigma(A_1)$. Moreover, since condition b) holds at every spectral point of $A_1$, the multiplicity of each of these roots is 1 (when $x_2$ is close to 0). We denote them by $\tau_{j,\lambda,i,\mu}(x_2), \ 1\leq i\leq s, \ $, where $\mu$ is defined by
\begin{equation}\label{tau limit}
\tau_{j,\lambda,i,\mu}(x_2) \to   \frac{\lambda}{\mu}, \ \mbox{as} \ x_2\to 0, 
\end{equation}     
which, of course, implies $i\in I_\mu$. It was mentioned above that $\tau_{j,\lambda,i,\mu}(x_2)$ are analytic functions of $x_2$ in a neghborhood of the origin. Obviously, $\tau_{j,\lambda,j,\lambda}(x_2)=1$.

Further, \eqref{projection} defined projections $P_{j,\lambda}(x_2)$. We define
\begin{equation}\label{spectral projection}
P_{j,\lambda,i,\mu}(x_2)=P_{i,\mu}(\tau_{j,\lambda,i,\mu} \ x_2).	
\end{equation}
It follows from Theorem \ref{limit_projections} and \eqref{spectral projection} that $P_{j,\lambda,i,\mu}(x_2)$ are analytic operator-valued functions of $x_2$ in a neighborhood of the origin, and
\begin{equation}\label{projection limit}
\lim_{x_2\to 0} P_{j,\lambda,i,\mu}(x_2)= \Po_{i,\mu}.	
\end{equation}
Also
$$\frac{d P_{j,\lambda,i,\mu}}{dx_2}(x_2)=\frac{d P_{i,\lambda}(\tau_{j,\lambda,i,\mu} \ x_2)}{dx_2}\bigg [\tau_{j,\lambda,i,\mu}(x_2)+x_2\frac{d\tau_{j,\lambda,i,\mu}(x_2)}{dx_2}\bigg ], $$
so that \eqref{tau_derivative_at_0} and \eqref{tau limit} imply that for $\lambda=\mu$ we have
\begin{equation}\label{P derivative at 0}
\frac{d P_{j,\lambda,i,\lambda}}{dx_2}(0)=\lim_{x_2\to 0}\frac{d P_{j,\lambda,i,\lambda}}{dx_2}(x_2)=\Po_{i,\lambda}^\prime(0).	
\end{equation}

Recall that projections \eqref{spectral projection} appeared in Proposition \ref{independent_projections} for $\lambda=\mu$. We renamed them here, since we need these projections corresponding to all roots of \eqref{roots1} and want to indicate to which reciprocal of a point in $\sigma(A_1)$ the corresponding point of the proper joint spectrum converges. Note that $\tau_{j,\lambda,j,\lambda}(x_2)=1$ implies $P_{j,\lambda,j,\lambda}(x_2)=P_{j,\lambda}(x_2)$. 


Since for $i\in I_\lambda$ components $\{\tilde{R}_i(x_1,x_2)=0\}$ have multiplicity 1 in the projective joint spectrum , the rank of $P_{j,\lambda,i,\mu}(x_2)$ is equal to 1 for every $i\in I_\lambda \cap I_\mu$. For other $i\notin I_\lambda$ the rank of $P_{j,\lambda,i,\mu}$ is equal to $m_i$ - the multiplicity of the spectral component $\{\tilde{R}_i=0\}$.

As it was mentioned above, for a rank 1 projection \eqref{alleigenvectors1} holds, and, therefore, if $\Delta$ is small, we have
$$\Big(x_{1,\lambda,j}(x_2+\Delta)A_1+(x_2 +\Delta)A_2-I\Big)P_{j,\lambda}(x_2+\Delta)=0.$$
We write this relation in the form
\begin{equation}\label{annihilation1}
\frac{1}{2\pi i}\int_\gamma \big(w-1\big)\bigg(w- x_{1,\lambda,j}(x_2+\Delta)A_1-(x_2 +\Delta)A_2\bigg)^{-1}dw=0,
\end{equation}
where, as above, $\gamma$ is a circle centered at 1 which separates 1 from all other eigenvalues
of $x_{1,\lambda,j}(x_2)A_1+x_2A_2$, and, therefore, the same is true for all $x_{1,\lambda,j}(x_2+\Delta)A_1+(x_2+\Delta)A_2$ for all sufficiently small $\Delta$. 

Let us write \eqref{annihilation1} as

\begin{eqnarray}
&0=\frac{1}{2\pi i}\int_\gamma \big (w-1\big )\Big (w-x_{1,\lambda,j}(x_2	)A_1-x_2A_2\Big)^{-1} \nonumber \\
&\times \left [ I-\left ( \big(x_{1,\lambda,j}^\prime(x_2)A_1+A_2 \big)\Delta+\left (\sum_{k=2}^\infty\frac{x_{1,\lambda,j}^{(k)}(x_2)}{k!}\Delta^k \right )A_1 \right )\right. \nonumber \\
& \times \Big (w-x_{1,\lambda,j}(x_2	)A_1-x_2A_2 \Big )^{-1}\bigg ]^{-1}dw \nonumber  \\
&=\frac{1}{2\pi i}\int_\gamma \big (w-1\big )\Big (w-x_{1,\lambda,j}(x_2)A_1-x_2A_2\Big )^{-1} \label{variation} \\
& \times \sum_{l=0}^\infty \left [\Bigg(\big( x_{1,\lambda,j}^\prime(x_2)A_1+A_2\big)\Delta+ \left (\sum_{k=2}^\infty\frac{x_{1,\lambda,j}^{(k)}(x_2)}{k!}\Delta^k \right )A_1  \Bigg) \right. \nonumber \\
&\times \Big (w-x_{1,\lambda,j}(x_2	)A_1-x_2A_2\Big )^{-1}\bigg ]^ldw. \nonumber
\end{eqnarray}

The right hand side of the last expression is an analytic function of $\Delta $ in a small neighborhood of $0$, so that all the derivatives at $\Delta=0$ vanish. We will need only the first and the second ones. Here are the corresponding relations.
\small\begin{eqnarray}
&\frac{1}{2\pi i}\int_\gamma \big (w-1\big )\Big (w-x_{1,\lambda,j}(x_2)A_1-x_2A_2\Big)^{-1}(x_{1,\lambda,j}^\prime (x_2)A_1+A_2)	\nonumber \\
&\times \Big (w-x_{1,\lambda,j}(x_2)A_1-x_2A_2\Big )^{-1}dw=0. \label{residue1}\\
&\frac{1}{2\pi i}\int_\gamma \big (w-1\big )\Big (w-x_{1,\lambda,j}(x_2)A_1-x_2A_2\Big )^{-1}\Bigg \{ \Bigg(( x_{1,\lambda,j}^\prime (x_2)A_1+A_2) \nonumber \\
&\times \Big(w-x_{1,\lambda,j}(x_2)A_1-x_2A_2\Big)^{-1}(x_{1,\lambda,j}^\prime (x_2)A_1+A_2)\Big(w-x_{1,\lambda,j}(x_2)A_1-x_2A_2\Big )^{-1}\Bigg ) \nonumber \\
& + \frac{x_{1,\lambda,j}^{\prime \prime}(x_2)}{2}A_1\Big(w-x_{1,\lambda,j}(x_2)A_1-x_2A_2\Big)^{-1}\Bigg \}dw=0. \label{residue2}
\end{eqnarray}\normalsize

We will now express relations \eqref{residue1} and \eqref{residue2} in terms of projections $P_{j,\lambda,i,\mu}(x_2)$.

The spectrum of the matrix $x_{1,\lambda,j}(x_2)A_1+x_2A_2$ consists of complex numbers $\mu_{j,\lambda,i,\nu}(x_2)$
 which are reciprocals of $\tau_{j,\lambda,i,\nu}(x_2), \ \nu\in \sigma(A_1)$ and, possibly, 0. 
 If $x_2$ is close to 0, the latter might occur only when $0\in \sigma(A_1)$. If 0 is an eigenvalue of $x_{1,\lambda,j}(x_2)A_1+x_2A_2$, we include it in the formulas below as $\mu_{j,\lambda,i,0}(x_2) $.

As mentiioned above, each eigenvalue $\mu_{j,\lambda,i,\nu}(x_2), \ i\in I_\lambda, \ \nu=\lambda$ is close to 1. These eigenvalues  tend to 1 as $x_2\to 0$ (and, of course, $\mu_{j,\lambda,j,\lambda}(x_2)=1$ for all $x_2$). All other $\mu_{j,\lambda,i,\nu}(x_2)$ stay away from 1 as $x_2\to 0$, that is for some positive constant $a$
\begin{equation}\label{estimate for mu}
|\mu_{j,\lambda,i,\nu}(x_2)-1|>a, \ \mbox{if} \  i\in I_\lambda \ \mbox{and} \ \nu \neq \lambda, \ \mbox{ or} \ i\notin I_\lambda. 
\end{equation}


The Jordan decomposition of the matrix $A(x_2)=x_{1,\lambda,j}(x_2)A_1+x_2A_2$ consists of Jordan blocks which have sizes between 
 1  and $m_i$ corresponding to eigenvalue $\mu_{j,\lambda,i,\nu}$ (we consider an eigenvector which is not in a Jordan cell of dimension higher than 1 as a Jordan cell of size 1), so for $w$ close to 1 we have
\begin{eqnarray}
\Big(w-x_{1j}(x_2)A_1-x_2A_2\Big)^{-1}=\frac{P_{j,\lambda,j,\lambda}(x_2) }{w-1}+
\sum_{i\in I_\lambda, i\neq j}\frac{P_{j,\lambda,i,\lambda}(x_2)}{w-\mu_{j,\lambda,i,\lambda}(x_2)} \nonumber \\
 \label{inverse}\\
+\sum_{\nu \in \sigma(A_1), \nu \neq \lambda}\sum_{i \in I_\nu} \sum_{r=0}^{m_i-1}\frac{\Big(\mu_{j,\lambda,i,\nu}(x_2)I-x_{1,\lambda,j}(x_2)A_1-x_2A_2\Big)^rP_{j,\lambda,i,\nu}(x_2)}{\Big(w-\mu_{j,\lambda,i,\nu}(x_2)\Big)^{r+1}}, \nonumber
\end{eqnarray}

The integrands of \eqref{residue1} and \eqref{residue2} are meromorphic functions in $w$ in a neighborhood of $w=1$. We use \eqref{inverse} to compute their residues at 1. A simple but tedious computation shows that
\begin{eqnarray}
&Res\Big|_{w=1}\Big[(w-1)\big(w-x_{1,\lambda,j}(x_2)A_1-x_2A_2\big)^{-1}(x_{1,\lambda,j}^\prime (x_2)A_1+A_2) \nonumber\\
&\times \big(w-x_{1,\lambda,j}(x_2)A_1-x_2A_2\big)^{-1}\Big ]	\label{first_residue} \\
&= P_{j,\lambda,j,\lambda}(x_2)(x_{1,\lambda,j}^\prime(x_2)A_1+A_2){\mathcal P}_{j,\lambda,j,\lambda}(x_2)=0 \nonumber
\end{eqnarray}
Passing to the limit as $x_2\to 0$ we obtain 
\begin{equation}\label{moment1}
\Po_{j,\lambda}(x_{1,\lambda,j}^\prime(0)A_1+A_2)\Po_{j,\lambda}=0, 
\end{equation}
which is the first relation claimed in Theorem \ref{first two moments}.

To simplify the computation of the residue of \eqref{residue2} at $w=1$ let us introduce the following operators
\begin{eqnarray*}
&{\mathscr R}(x_2)=x_{1,\lambda,j}^\prime(x_2)A_1+A_2, \\
& \\
&{\mathscr S}_1(x_2)=P_{j,\lambda,j,\lambda}(x_2){\mathscr R}(x_2)\left ( \displaystyle \sum_{i\in I_\lambda, i\neq j}\frac{P_{j,\lambda,i,\lambda}(x_2)}{1-\mu_{j,\lambda,i,\lambda}(x_2)}\right ){\mathscr R}(x_2)P_{j,\lambda,j,\lambda}(x_2), \\
& \\
&{\mathscr S}_2(x_2)=P_{j,\lambda,j,\lambda}(x_2){\mathscr R}(x_2) \\ 
&\times\Bigg (\displaystyle  \sum_{\nu \in \sigma(A_1), \nu \neq \lambda}\sum_{i \in I_\nu} \sum_{r=0}^{m_i-1}\frac{\Big(\mu_{j,\lambda,i,\nu}(x_2)I-x_{1,\lambda,j}(x_2)A_1-x_2A_2\Big)^rP_{j,\lambda,i,\nu}(x_2)}{\Big(1-\mu_{j,\lambda,i,\nu}(x_2)\Big)^{r+1}}\Bigg )  \\
&\times {\mathscr R}(x_2)P_{j,\lambda,j,\lambda}(x_2).
\end{eqnarray*}

A straightforward computation using \eqref{inverse} and \eqref{first_residue} shows that \eqref{residue2} turns into

\begin{equation}\label{second_residue} 
{\mathscr S}_1(x_2)+{\mathscr S}_2(x_2)+\frac{x_{1,\lambda,j}^{\prime \prime}(x_2)}{2}P_{j,\lambda,j,\lambda}(x_2)A_1P_{j,\lambda,j,\lambda}(x_2)=0.
\end{equation}

Our next step is finding the limits of each of terms of \eqref{second_residue} as $x_2\to 0$.  

If $i\in I_\lambda$, \ $\mu_{j,\lambda,i, \lambda}(x_2)\to 1$ as $x_2\to 0$, and condition b) implies that $\mu_{j,\lambda,i,\lambda}^\prime(0)\neq 0$, so we have

\begin{eqnarray*}
&{\mathscr S}_1(x_2)=P_{j,\lambda,j,\lambda}(x_2){\mathscr R}(x_2)\left (\displaystyle \sum	_{i\in I_\lambda, i\neq j}\frac{P_{j,\lambda,i,\lambda}(x_2)}{1-\mu_{j,\lambda,i,\lambda}(x_2)}\right ){\mathscr R}(x_2)P_{j,\lambda,j,\lambda}(x_2) \\
&=P_{j,\lambda,j,\lambda}(x_2){\mathscr R}(x_2)\left (\displaystyle \sum	_{i\in I_\lambda, i\neq j}\frac{P_{j,\lambda,i,\lambda}(x_2)}{-\mu_{j,\lambda,i,\lambda}^\prime(0)x_2+O(|x_2|^2)}\right ){\mathscr R}(x_2)P_{j,\lambda,j,\lambda}(x_2) \\
&=\left ( \displaystyle \sum	_{i\in I_\lambda, i\neq j}\frac{	P_{j,\lambda,j,\lambda}(x_2){\mathscr R}(x_2)P_{j,\lambda,i,\lambda}(x_2)}{x_2(-\mu_{j,\lambda,i,\lambda}^\prime(0)+o(1))} \right )  {\mathscr R}(x_2)P_{j,\lambda,j,\lambda}(x_2) \\
&=\left (\displaystyle \sum_{i\in I_\lambda, i\neq j}\frac{P_{j,\lambda,j,\lambda}(x_2)\Big({\mathscr R}(0)+(x_{1,\lambda,j}^{\prime \prime}(0)x_2+O(|x_2|^2))A_1\big)P_{j,\lambda,i,\lambda}(x_2)}{x_2(-\mu_{j,\lambda,i,\lambda}^\prime(0)+o(1))} \right ) \\
&\times {\mathscr R}(x_2)P_{j,\lambda,j,\lambda}(x_2).
\end{eqnarray*}

\vspace{.2cm}

Write 
$$\Phi (x_2)=P_{j,\lambda,j,\lambda}(x_2){\mathscr R}(0)P_{j,\lambda,i,\lambda}(x_2). $$

Since  ${\mathcal P}_{j,\lambda}A_1{\mathcal P}_{i,\lambda}=\lambda {\mathcal P}_{j,\lambda}{\mathcal P}_{i,\lambda}=0$ for $i,j\in I_\lambda, \ i\neq j$, relation \eqref{moment1_different} in  Proposition \ref{independent_projections} 
implies 
$$
\Phi(0)=0. 
$$
Also, \eqref{P derivative at 0} and \eqref{relation3} yield
$$\Phi^\prime(0)=0, $$

and, hence,
$$\Phi(x_2)=O(|x_2|^2), \ \mbox{as} \ x_2\to 0, $$ 
resulting in
\begin{equation}\label{S_1}
\lim_{x_2 \to 0}{\mathscr S}_1(x_2)=0.	
\end{equation}

To evaluate the limit of ${\mathscr S}_2(x_2)$ we observe that since $A_1$ is normal, Theorem \ref{limit_projections} amd \eqref{tau limit} impliy that for $\lambda\neq \nu, \ i\in I_\nu $ and 
$$((\mu_{j,\lambda,i,\nu}(x_2)I-x_{1,\lambda,j}(x_2)A_1-x_2A_2)^rP_{j,\lambda,i,\nu}(x_2) \to 0 \ \mbox{for $r>0$ as} \ x_2\to 0.$$
Since by \eqref{estimate for mu} we have 
$|1-\mu_{j,\lambda,i,\lambda}(x_2)|>a>0 $, and, since by Theorem \ref{limit_projections} all projections $P_{j,\lambda,i,\nu}(x_2)$ are bounded, all terms in the expression of ${\mathscr S}_3(x_2)$ with $r>0$ tend to 0 as $x_2\to 0$. This yields

\begin{equation}\label{S_3}
\lim_{x_2 \to 0}{\mathscr S}_2(x_2)={\mathcal P}_{j,\lambda}A_2 \left ( \displaystyle \sum_{\nu\in \sigma(A_1), \nu\neq \lambda} \sum_{i\in I_\nu} \frac{{\mathcal P}_{i,\nu}}{1-\mu_{j,\lambda,i,\nu}(0)}\right )A_2{\mathcal P}_{j,\lambda}.	
\end{equation}
Equations \eqref{sum_of_projections} in Proposition \ref{independent_projections}, \eqref{tau limit}, and \eqref{S_3} now give
\begin{equation} \label{almost there}
\lim_{x_2 \to 0}{\mathscr S}_2(x_2)={\mathcal P}_{j,\lambda}A_2 \left ( \displaystyle \sum_{\nu\in \sigma(A_1), \nu\neq \lambda} \frac{\lambda {\mathscr P}_\nu}{\lambda -\nu}\right )A_2\Po_{j,\lambda}=\lambda \Po_{j,\lambda}A_2T_\lambda A_2\Po_{j,\lambda}.\end{equation} 

Finally, passing to the limit in \eqref{second_residue} as $x_2\to 0$, \eqref{S_1}, and \eqref{almost there} along with $\Po_{j,\lambda}A_1=A_1\Po_{j,\lambda}=\lambda \Po_{j,\lambda}$, \ $\Po_{j,\lambda}^2=\Po_{j,\lambda}$, result in

$$\Po_{j,\lambda}A_2T_\lambda A_2\Po_{j,\lambda}+\frac{x_{1,\lambda,j}^{\prime \prime}(0)}{2}\Po_{j,\lambda}=0,
$$
which finishes the proof of Theorem \ref{first two moments} for $\lambda \neq 0$.

\vspace{.2cm}

\noindent II. $\lambda=0$

In this case \eqref{projection for 0} gives the expression for $P_{j,0}(x_2)$. 

The spectrum of $A_1+x_2A_2$ consists of the roots of
$$R_i(1,x_2,z)=\tilde{R}_j(x_2,z)=0. $$
If $x_2$ is close to 0, we denote these roots by $x_{3,\nu,i}(x_2)$, where $\nu\in \sigma(A_1)$ is the eigenvalue of $A_1$ to which $x_{3,\nu,i}(x_2)$ converges as $x_2\to 0$, and $i\in I_\nu$. Each $x_{3,\nu,i}$ has multiplicity $m_i$, and, in particular, this multiplicity is equal to 1 for $i\in I_0$.



Fix $ j\in I_0$ and $x_2$ close to 0. Similar to what we had in the case $\lambda \neq 0$, here
$$\Big(A_1+(x_2+\Delta) A_2- x_{3,0,j}(x_2+\Delta )I\Big) P_{j,0}(x_2)=0,$$
if $\Delta$ is small enough. An analog of \eqref{annihilation1} in this setting is
\begin{equation}\label{annihilation5}
\frac{1}{2\pi i}\int_\gamma w\Big((w+x_{3,0,j}(x_2+\Delta))I-A_1-(x_2 +\Delta)A_2\Big)^{-1}dw=0,
\end{equation}
and, as above, $\gamma$ separates 0 from the rest of the spectrum of $A_1+x_2A_2-x_{3,0,j}(x_2)I$. Condition $\tilde{b})$ implies that there is $a>0$ independent of $x_2$ such that $\gamma$ can be taken to be circle centered at the origin of radius $a|x_2|$. 

Following a similar line of presentation as for $\lambda\neq 0$, we write \eqref{annihilation5} as a power series in $\Delta$:
\begin{eqnarray}
&0=\frac{1}{2\pi i}\int_\gamma w\Big((w+x_{3,0,j}(x_2)I-A_1-x_2A_2\Big)^{-1} \nonumber \\
&\times \left [ I-\left \{ \big(A_2-x_{3,0,j}^\prime(x_2) I\big)\Delta-\left (\displaystyle \sum_{k=2}^\infty\frac{x_{3,0,j}^{(k)}(x_2)}{k!}\Delta^k\right )I \right \}\right. \nonumber \\
& \times \Big((w+x_{3,0,j}(x_2)I-A_1-x_2A_2\Big)^{-1}\bigg ]^{-1}dw \nonumber  \\
&=\frac{1}{2\pi i}\int_\gamma  w\Big((w+x_{3,0,j}(x_2)I-A_1-x_2A_2\Big)^{-1} \label{variation2} \\
& \times \displaystyle \sum_{l=0}^\infty \left [\left \{ \big(A_2-x_{3,0,j}^\prime(x_2) I\big)\Delta-\left (\sum_{k=2}^\infty\frac{x_{3,0,j}^{(k)}(x_2)}{k!}\Delta^k\right )I \right \} \right. \nonumber \\
&\times \Big((w+x_{3,0,j}(x_2)I-A_1-x_2A_2\Big)^{-1}\bigg ]^ldw. \nonumber
\end{eqnarray}
and equate the coefficients for powers of $\Delta$ to 0. Again, we are interested in the the coefficients for $\Delta$ and $\Delta^2$.

In this case the inverse is given by:
\large \begin{eqnarray}
&\Big((w+x_{3,0,j}(x_2)I-A_1-x_2A_2\Big)^{-1} \nonumber\\
&= \frac{P_{j,0}(x_2)}{w}+\displaystyle \sum_{i\in I_0, i\neq j} \frac{P_{i,0}(x_2)}{w+x_{3,0,j}(x_2)-x_{3,0,i}(x_2)} \nonumber\\
& \label{inverse for 0} \\
&+\displaystyle \sum_{\nu\in \sigma(A_1), \nu\neq 0} \sum_{i\in I_\nu} \sum_{l=0}^{m_i-1}\frac{\big(A_1+x_2A_2 -x_{3,\nu,i}(x_2)I\big)^lP_{i,0}(x_2)}{\big(w+x_{3,0,j}(x_2)-x_{3,\nu,i}(x_2)\big)^{l+1} }. \nonumber	
%
\end{eqnarray}\normalsize

We substitute \eqref{inverse for 0} into \eqref{variation2} and evaluate the residues of the coefficients for $\Delta$ and $\Delta^2$. The details of this evaluation are similar to the ones in the case $\lambda \neq 0$ and are omitted.

The proof of Theorem \ref{first two moments} is complete.

\vspace{.2cm}

\vspace{.5cm}

Our next result shows that under the assumptions of Theorem \ref{first two moments} the limit component projections coming out of the joint spectrum 
$\sigma_p(A_1,A_1A_2)$ coincide with $\Po_{j,\lambda}$.  

In a similar way as in our previous consideration, it follows from the implicit function theorem that if $\sigma(A_1,A_1A_2)$ satisfies conditions \ a) and b), then for each spectral component of $\sigma_p(A_1,A_1A_2)$ passing through $(1/\lambda,0)$ the first coordinate is expressed as an analytic function of the second one. To distinguish from the previous case when we considered the pair $(A_1,A_2)$, we denote the coordinates of a spectral point in $\sigma_p(A_1,A_1A_2)$ by $(z_1,z_2)$, so that the in the $j$-th component passing through $(1/\lambda,0)$, \  $z_1=z_{1,\lambda,j}(z_2)$. Similarly,
 we denote by $Q_{j,\lambda}(z_2)$ the projection
 $$Q_{j,\lambda}(z_2)=\frac{1}{2\pi i} \int_{|w-1|=\tilde{\delta}_j} \bigg(w-z_{1,\lambda,j}(z_2)A_1-z_2A_1A_2\bigg)^{-1}dw, $$
where the contours $\{w: \ |w-1|=\tilde{\delta}_j\}$  separates 1 from the rest of the spectrum of $\big(z_{1,\lambda,j}(z_2)A_1+z_2A_1A_2\big)$, as it was in \eqref{projection}. Since $A_1$ is normal, Theorem \ref{limit_projections} implies that there are limits ${\mathcal Q}_{j,\lambda}$ of $Q_{j,\lambda}(x_2)$ as $x_2\to 0$. 

\begin{lemma}\label{same_projections}
Suppose that pairs of matrices $(A_1,A_2)$ and $(A_1,A_1A_2)$ satisfy the conditions of Theorem \ref{first two moments}, and let $\lambda\in \sigma(A_1), \ \lambda\neq 0$. Then ${\mathcal P}_{j,\lambda}={\mathcal Q}_{j,\lambda} $.
\end{lemma}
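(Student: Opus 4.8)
The plan is to realize both systems of idempotents as the spectral resolution of one and the same operator on the $\lambda$-eigenspace of $A_1$, and then invoke the uniqueness of such a resolution.

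First I would assemble the structural facts, working on $H_\lambda$, the range of ${\mathscr P}_\lambda$; since $A_1$ is normal, $\dim H_\lambda$ equals the multiplicity $M_\lambda=\sum_{j\in I_\lambda}m_j$ of $\lambda$, which under the hypotheses of Theorem~\ref{first two moments} equals $|I_\lambda|$. For the pair $(A_1,A_2)$ each ${\mathcal P}_{j,\lambda}$, $j\in I_\lambda$, is a rank one idempotent (the components have multiplicity one), the ${\mathcal P}_{j,\lambda}$ are pairwise orthogonal and $\sum_{j\in I_\lambda}{\mathcal P}_{j,\lambda}={\mathscr P}_\lambda$ by Proposition~\ref{independent_projections}, and (as established in the proof of Proposition~\ref{relations_between_projections}) ${\mathcal P}_{j,\lambda}A_1=A_1{\mathcal P}_{j,\lambda}=\lambda{\mathcal P}_{j,\lambda}$; combined with $\sum_i{\mathcal P}_{i,\lambda}={\mathscr P}_\lambda$ this gives ${\mathscr P}_\lambda{\mathcal P}_{j,\lambda}={\mathcal P}_{j,\lambda}{\mathscr P}_\lambda={\mathcal P}_{j,\lambda}$, so $\{{\mathcal P}_{j,\lambda}\}_{j\in I_\lambda}$ is a complete system of rank one idempotents of $H_\lambda$. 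All of this applies verbatim to the pair $(A_1,A_1A_2)$ and the idempotents ${\mathcal Q}_{j,\lambda}$, since $A_1$ is still normal and this pair is also assumed to satisfy the hypotheses of Theorem~\ref{first two moments}.

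Next I would produce two expansions of the operator $B:={\mathscr P}_\lambda A_2{\mathscr P}_\lambda$ on $H_\lambda$. Summing ${\mathcal P}_{i,\lambda}A_2{\mathcal P}_{j,\lambda}=0$ for $i\neq j$ from Proposition~\ref{relations_between_projections} together with ${\mathcal P}_{j,\lambda}A_2{\mathcal P}_{j,\lambda}=-x'_{1,\lambda,j}(0){\mathcal P}_{j,\lambda}$ from \eqref{first moment} over all $i,j\in I_\lambda$, and using $\sum_j{\mathcal P}_{j,\lambda}={\mathscr P}_\lambda$, yields $B=-\sum_{j\in I_\lambda}x'_{1,\lambda,j}(0){\mathcal P}_{j,\lambda}$. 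Carrying out the identical computation for $(A_1,A_1A_2)$ gives ${\mathscr P}_\lambda(A_1A_2){\mathscr P}_\lambda=-\sum_{j\in I_\lambda}z'_{1,\lambda,j}(0){\mathcal Q}_{j,\lambda}$; but $A_1{\mathscr P}_\lambda=\lambda{\mathscr P}_\lambda$, so the left-hand side equals $\lambda B$, whence $B=-\sum_{j\in I_\lambda}\lambda^{-1}z'_{1,\lambda,j}(0){\mathcal Q}_{j,\lambda}$. Thus $B$ is exhibited in two ways as a combination of a complete system of rank one idempotents of $H_\lambda$.

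The point that makes the argument go through, and the step I would be most careful about, is that such a decomposition of $B$ is unique; this is exactly where regularity condition b) is used. With $n=2$, condition b) says that $\partial R_i$ and $\partial R_j$ at $(1/\lambda,0,1)$ are non-proportional for distinct $i,j\in I_\lambda$; since their first components are non-zero by a) and $x'_{1,\lambda,j}(0)=-(\partial R_j/\partial x_2)/(\partial R_j/\partial x_1)$ at $(1/\lambda,0,1)$ by the implicit function theorem, this is precisely the statement that the numbers $x'_{1,\lambda,j}(0)$, $j\in I_\lambda$, are pairwise distinct. Hence $B$ is a diagonalizable operator on the $|I_\lambda|$-dimensional space $H_\lambda$ with simple spectrum, so its decomposition into rank one idempotents is unique; in particular no degenerate case with repeated eigenvalues of $B$ arises. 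Both displays of $B$ above are such decompositions, so $\{{\mathcal P}_{j,\lambda}\}_{j\in I_\lambda}=\{{\mathcal Q}_{j,\lambda}\}_{j\in I_\lambda}$ as sets of operators, and matching each ${\mathcal Q}_{j,\lambda}$ to the unique ${\mathcal P}_{i,\lambda}$ with the same one-dimensional range — equivalently, labelling the components of $\sigma_p(A_1,A_1A_2)$ through $(1/\lambda,0)$ so that $z'_{1,\lambda,j}(0)=\lambda\,x'_{1,\lambda,j}(0)$ — yields ${\mathcal P}_{j,\lambda}={\mathcal Q}_{j,\lambda}$, completing the proof. Everything other than the translation of condition b) into distinctness of the $x'_{1,\lambda,j}(0)$ is routine bookkeeping.
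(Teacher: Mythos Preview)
Your proof is correct and follows essentially the same approach as the paper: both arguments recognize $\{{\mathcal P}_{j,\lambda}\}$ and $\{{\mathcal Q}_{j,\lambda}\}$ as the spectral resolution of the compression ${\mathscr P}_\lambda A_2{\mathscr P}_\lambda$ (equivalently $\lambda^{-1}{\mathscr P}_\lambda A_1A_2{\mathscr P}_\lambda$) on $H_\lambda$, use condition b) to conclude the eigenvalues $-x'_{1,\lambda,j}(0)$ are distinct, and appeal to uniqueness of that resolution. The paper phrases this via an explicit eigenbasis $\{e_j\}$ while you phrase it via idempotent decompositions, but the content is the same, including the implicit relabeling of the components of $\sigma_p(A_1,A_1A_2)$ to match indices.
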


\begin{proof}
 
 First we observe that relation \eqref{moment1} (the first relation in Theorem \ref{first two moments}) implies that for $j\in I_\lambda$  the compression of $A_2$ to  the range of $\Po_{j,\lambda}$ is $-x_{1,\lambda,j}^\prime(0)I_{{\mathcal R}(\Po_{j,\lambda})}$, where  $I_{{\mathcal R}(\Po_{j,\lambda})}$ is the identity matrix on the range of $\Po_{j,\lambda}$. This implies that $(-x_{1,\lambda,j}^\prime(0)), \ j\in I_\lambda$ are the eigenvalues of the compression of $A_2$ to the $\lambda$-eigenspace of $A_1$, which by \eqref{sum_of_projections} is the sum of the ranges of $\Po_{j,\lambda}$. Each of these ranges is of dimension 1, and by condition b)  all numbers $(x_{1,\lambda,j}^\prime(0))$ are different. Let $e_j, \ j\in I_\lambda$ be an eigenvector for $\Po_{j,\lambda}A_2\Po_{j,\lambda}$. Then these eigenvectors form a basis of the $\lambda$-eigenspace of $A_1$. 
 
 Similarly, applying Theorem \ref{first two moments} to the pair $(A_1,A_1A_2)$ we obtain that the compression of $A_1A_2$ to the $\lambda$-eigenspace of $A_1$ has $(-z_{1,\lambda.j}^\prime(0))$ as eigenvalues of multiplicity one each. 
   
    Propositions \ref{independent_projections} and \ref{relations_between_projections} imply
$${\mathscr P}_\lambda A_1A_2{\mathscr P}_\lambda= \left (\sum_{j=1}^{k_\lambda} {\mathcal P}_{j,\lambda} \right )A_1A_2\left (\sum_{j=1}^{k_\lambda} {\mathcal P}_{j,\lambda} \right )=A_1\sum_{j=1}^{k_\lambda} {\mathcal P}_{j,\lambda}A_2{\mathcal P}_{j,\lambda},$$
so that for $j\in I_\lambda$
$${\mathscr P}_\lambda A_1A_2{\mathscr P}_\lambda e_j=A_1\Po_{j,\lambda}A_2\Po_{j,\lambda}e_j=-x_{1,\lambda,j}^\prime(0)A_1e_j=-\lambda x_{1,\lambda,j}^\prime(0)e_j. $$
This shows that $-\lambda x_{1,\lambda,j}^\prime(0), \ j\in I_\lambda$ form  the spectrum of the compression of $A_1A_2$ to the $\lambda$-eigenspace of $A_1$ and that $e_j, \ j\in I_\lambda$ form the corresponding eigenbasis. Of course, this implies the statement we are proving. 
\end{proof}
\vspace{.3cm}

\Large \textit{Proof of Theorem \ref{A^2}}\normalsize

\vspace{.3cm}
Let $\lambda\neq 0$. Apply Theorem \ref{first two moments} to each of the pairs $(A_1,A_2)$ and $(A_1,A_1A_2)$ and use  Lemma \ref{same_projections}. We obtain
\large \begin{eqnarray*}
&  \Po_{j,\lambda}A_2T_\lambda A_2	\Po_{j,\lambda}=-\frac{x_{1,\lambda,j}^{\prime \prime}(0)}{2}\Po_{j,\lambda}, \\
&   \Po_{j,\lambda}A_1A_2T_\lambda A_1A_2\Po_{j,\lambda}=-\frac{z_{1,\lambda,j}^{\prime \prime}(0)}{2}\Po_{j,\lambda}.
\end{eqnarray*}\normalsize
It follows from the definition of the operator $T_\lambda$, \eqref{T}, that
\large$$T_\lambda A_1=A_1T_\lambda = -\Big (\sum_{\mu \in \sigma(A_1), \mu\neq \lambda} {\mathscr P}_\mu \Big ) +\lambda T_\lambda=-I+{\mathscr P}_\lambda+\lambda T_\lambda.$$\normalsize	
Since $\Po_{j,\lambda}A_1=\lambda \Po_{j,\lambda}$, this implies
\large\begin{eqnarray*}
&-\lambda \Po_{j,\lambda}A_2^2\Po_{j,\lambda} +\lambda \Po_{j,\lambda}A_2{\mathscr P}_\lambda A_2\Po_{j,\lambda}+\lambda ^2 \Po_{j,\lambda}A_2TA_2\Po_{j,\lambda} \\
&=-\frac{z_{1,\lambda,j}^{\prime \prime}(0)}{2}\Po_{j,\lambda}, \\
& \\
&-\lambda \Po_{j,\lambda}A_2^2\Po_{j,\lambda} +\lambda \Po_{j,\lambda}A_2	\Big(\sum_{l\in I_\lambda} \Po_{l,\lambda}\Big)A_2\Po_{j,\lambda}=\frac{\lambda ^2x_{1,\lambda,j}^{\prime \prime}(0)-z_{1,\lambda,j}^{\prime \prime}(0)}{2}\Po_{j,\lambda},\\
& \\
&-\lambda \Po_{j,\lambda}A_2^2\Po_{j,\lambda}+\lambda \Po_{j,\lambda}A_2\Po_{j,\lambda}A_2\Po_{j,\lambda}=\frac{\lambda ^2x_{1,\lambda,j}^{\prime \prime}(0)-z_{1,\lambda,j}^{\prime \prime}(0)}{2}\Po_{j,\lambda}. \\
\end{eqnarray*}\normalsize

Now, $\Po_{j,\lambda}^2=\Po_{j,\lambda}$ results in
$$\Po_{j,\lambda}A_2\Po_{j,\lambda}A_2\Po_{j,\lambda}= \Big (\Po_{j,\lambda}A_2\Po_{j,\lambda} \Big )^2, $$
and \eqref{moment1} yields
$$ \Po_{j,\lambda}A_2^2\Po_{j,\lambda}=\frac{z_{1j}^{\prime \prime}(0)+2\lambda\big(x_{1j}^\prime(0)\big)^2-\lambda ^2x_{1j}^{\prime \prime}(0)}{2\lambda}\Po_{j,\lambda}.$$
The proof is complete.

\vspace{.5cm}

\section{\textbf{Application to representations of Coxeter groups.\\ Proof of Theorem \ref{for Coxeter}}}\label{Coxeter}

\vspace{.3cm}
Recall that a Coxeter group is a finitely generated group $G$ with generators 
$g_1,\dots,g_n$ satisfy the following relations:
\[
(g_ig_j)^{m_{ij}}=1, \quad i,j=1,\dots,n,
\]
where $m_{ii}=1$ and $m_{ij}\in {\mathbb N}\cup \{ \infty \}$, \ 
 $m_{ij}\geq 2$ when $i\neq j$. 
It is easy to see that to avoid redundancies we must have $m_{ij}=m_{ji}$, 
and that $m_{ij}=2$ means $g_i$ and $g_j$ commute. 
The set of generators $\{g_1,\dots, g_n\}$ is called a \emph{Coxeter set 
of generators}, and  $m_{ij}$ are called the \emph{Coxeter exponents}. The matrix $\big \Vert m_{ij}\big \Vert $ is called a \textit{Coxeter matrix}. 
A Coxeter group with 2 generators is called a \textit{Dihedral} group. The monographs \cite{BB}, \cite{GP}, and \cite{Hu} are good sources for 
 information on Coxeter groups.

\vspace{.2cm}

Let $G$ be a Coxeter group with Coxter generators $g_1,...,g_n$, and $\rho: G\to V$ be a finite dimensional unitary representation of $G$. Suppose that $A_1,....,A_n$ is a tuple of $N\times N$ matrices. In this section we investigate what information about relations between $A_1,\dots,A_n$ can be obtained from the fact that the joint spectrum $\sigma_p(\rho(g_1),\dots,\rho(g_n))$ is contained in $\sigma_p(A_1,...,A_n)$.

Of course, for every pair of generators $g_i,g_j$ the representation $\rho$ generates a unitary representation of the Dihedral group generated by $g_i$ and $g_j$. It is well-known that every irreducible unitary representation of a Dihedral group is either 1- or 2-dimensional, and by Maschke's Theorem (\cite{M1}, \cite{M2}) that every representation is a sum of irreducible ones. The one dimensional representations of a Dihedral group are:
$$\rho(g_1)=\rho(g_2)=I , \mbox{or} \ \rho(g_1 )=\rho(g_2)=-I$$
 for an odd order group. For an even order group there is an additioal one-dimensional representation
 $$\rho(g_1)=I, \ \rho(g_2)=-I. $$
 Two-dimensional irreducible representations are equivalent to those generated by
 $$ \rho(g_1)=\left [ \begin{array}{rr} 1&0 \\ 0&-1 \end{array} \right ], \ \rho(g_2)=\left [ \begin{array}{rr} \cos \alpha & \sin \alpha \\ \sin \alpha & - \cos \alpha \end{array}\right ],$$
 where $0<\alpha<\pi$. If the group is finite, $\alpha $ is a rational multiple of $\pi$.
 It is easy to see (cf \cite{CST}) that the proper joint spectrum of images of the Coxeter generators of a Dihedral group under an irreducible representation could be either a line of the form (one-dimensional)
$$ \{(x_1,x_2): \ x_1\pm x_2=\pm 1\},$$
or a ``complex ellips" (two-dimensional)
$$\{(x_1,x_2): \ x_1^2+2(\cos \alpha) x_1x_2 +x_2^2=1\},$$ 
and the joint spectrum of $(\rho(g_1)$ and $\rho(g_1)\rho(g_2))$ could be
$$\{x_1\pm x_2=\pm 1\}  $$
for one-dimensional representations, and 
$$\{x_1^2-x_2^2+2(\cos \alpha)x_2=1\} $$ 
for two-dimensional.

\vspace{.2cm}

\vspace{.3cm}

\Large \textit{Proof of Theorem \ref{for Coxeter}}\normalsize

\vspace{.3cm}

 Consider $2\leq i\leq n$. Condition $(\ast)$ in the statement of Theorem \ref{for Coxeter} implies that every line and ellipse in the joint spectrum of $\rho(g_1)$ and $\rho(g_i)$ has multiplicity one, and, therefore, by condition (II) in  Theorem \ref{for Coxeter},  \ 
$\sigma_p(A_1,A_i)$ and $\sigma_p(A_1,A_1A_i)$   satisfy Theorems \ref{first two moments} and \ref{A^2} at $(\pm 1,0)$. Details of the local analysis near each of them are similar, so we concentrate on $(1,0)$. Following the notations of the previous section we denote by $\Po_{j,1}$ the component projections from Theorem \ref{limit_projections}. We will apply Theorem \ref{A^2}.

First suppose that the $j$-th component of $\sigma_p(A_1,A_i)$ which passes through $(1,0)$ is the line
$$x_1\pm x_2=1. $$
Then the corresponding component of the joint spectrum of $A_1$ and $A_1A_2$ is also a line, and  $x_{1j}(x_2)=1\pm x_2$, \ $z_{1j}(x_2)=1\pm z_2$, so that $x_{1j}^{\prime \prime}=z_{1j}^{\prime \prime}\equiv 0, \ (x_{1j}^\prime)^2\equiv 1 $. Theorem \ref{A^2} now implies
\begin{equation}\label{1}
\Po_{j,1}A_2^2\Po_{j,1}=\Po_{j_1}.
\end{equation} 
Let the $j$-th component is the ellipse
$$x_1^2+2\Big( \cos \alpha \Big) x_1x_2+x_2^2=1. $$
Then the corresponding component of $\sigma_p(A_1,A_1A_2)$ is given by
$$z_1^2-z_2^2+2\Big (\cos \alpha \Big)z_2=1. $$
In this case 
\begin{eqnarray*}
&x_{1j}^\prime(0)=z_{1j}^\prime (0)=-\cos \alpha, \ x_{1j}^{\prime \prime}(0)=-1+\cos ^2 \alpha, \\ 
&z_{1j}^{\prime \prime}(0)=1-\cos ^2 \alpha, \end{eqnarray*}
and Theorem \ref{A^2} for $\lambda=1$ shows that \eqref{1} holds in this case too. 

Thus, \eqref{sum_of_projections} shows that the compression of $A_2^2$ to the 1-eigenspace of $A_1$ is the identity. Since $A_1$ is normal, the projection ${\mathscr P}_1$, onto this subspace is orthogonal. Since the norm of $A_2$ is equal to 1, this implies that every 1-eigenvector of $A_1$ is a 1-eigenvector of $A_2^2$. 

A similar proof shows that every (-1)-eigenvector of $A_1$ is a \newline 1-eigenvector of $A_2^2$. 

Further, every component of the joint spectrum of Coxeter generators under a representation of a Dihedral group, either an ellipse, or a straight line, passes the same number of times through $(\pm 1,0)$ and through $(0,\pm 1)$. That is, if $t_1$ and $t_2$ are the multiplicities of 1 and -1 as eigenvalues of $\rho(g_1)$, and $u_1$ and $u_2$ are the same numbers for $\rho(g_2)$, then 
$$t_1+t_2=u_1+u_2. $$
Now, condition II) in the statement of Theorem \ref{for Coxeter} implies that the sum of multiplicities of eigenvalues 1 and -1  of $A_1$ and $A_i$ are the same. This sum is equal of the sum of dimensions of all Jordan cells in the Jordan representation of $A_i$ corresponding to eigenvalues $\pm 1$ and, of course, to the multiplicity of eigenvalue 1 for $A_i^2$. Thus, the sum of 1- and (-1)-eigenspaces of $A_1$ is exactly the invariant subspace for $A_i^2$ corresponding to eigenvalue 1, and the restriction of $A_i^2$ to this subspace is the identity. Since the square of a Jordan cell which corresponds to a non-zero eigenvalue and has  dimension higher than 1 is never diagonal, we see that  $\pm 1$-eigenvectors for $A_1$ and $\pm 1$-eigenvectors of $A_i$ span the same subspace, which we call $L$, that is invariant under the action of both $A_1$ and $A_i$. As mentioned above, every component of the joint spectrum of $\rho(g_1)$ and $\rho)g_2$ passes through $(\pm 1,0)$, so that the dimension of $L$ is equal the dimension of $\rho$. Of course, $L$ being spanned by $\pm 1$-eigenvectors of $A_1$ is independent of $i=2,...,n$, and 1) is proved.

The fact that $A_i\Big|_L$ are unitary and self-adjoint is straightforward. Indeed, it follows from  $L$ being spanned by $\pm 1$-eigenvectors of $A_i$ and from a simple fact that for an operator of norm 1 any two eigenvectors corresponding to different unimodular eigenvalues are orthogonal.
Since every component of the joint spectrum of Coxeter generators of a representation of a Coxeter group passes through $(\pm 1,0,...,0)$, we see that \eqref{same spectra} holds. The fact that restrictions of $A_i\Big|_L$ generate a representation of $G$ follows from the Theorem 1.1 in \cite{CST} stating that for a representation of a pair of unitary matrices $U_1,U_2$ their joint spectrum determines a number m such that $(U_!U_2)^m=1$ ($m$ is infinite, if there is no such relation). Thus, the Coxeter matrices of $\rho$ and $\tilde{\rho}$ are the same, and 2) is proved.

Finally, 3) follows from \eqref{same spectra} and Theorem 1.2 in \cite{CST}.

We are done.


\end{document}